\newtheorem{theorem}{Theorem}
\newtheorem{lemma}{Lemma}
\theoremstyle{definition}
\newtheorem{remark}{Remark}
\numberwithin{lemma}{section}
\DeclareMathOperator{\sgn}{sgn}
\renewcommand{\mod}[1]{\ensuremath{\left( \mathrm{mod} \ #1 \right)}}
\newcommand{\h}{\ensuremath{\mathrm{h}}}
\newcommand{\upd}{\ensuremath{\mathrm{d}}}
\newcommand{\bbR}{\ensuremath{\mathbb{R}}} % blackboard bold R
\newcommand{\RR}{\ensuremath{\mathbb{R}}} % blackboard bold R
\newcommand{\CC}{\ensuremath{\mathbb{C}}}
\newcommand{\veps}{\ensuremath{\varepsilon}}
\newcommand{\defeq}{\ensuremath{\stackrel{\mathrm{def}}{=}}} % definition =
\newcommand{\To}{\ensuremath{\longrightarrow}} % mapping arrow
\newcommand{\g}{\ensuremath{\mathrm{g}}}
\newcommand{\prtl}{\ensuremath{\partial}}
\newcommand{\cone}{\ensuremath{C(\mathbb{S}^1_\rho)}}
\DeclareMathOperator{\supp}{supp}
\let\Re\relax
\DeclareMathOperator{\Re}{Re}
\begin{document}

\title[Spectral clusters on polygonal domains]{$L^p$-bounds on spectral clusters
  associated to polygonal domains}

\author[M.D.~Blair]{Matthew D.\ Blair} \address{Department of Mathematics and Statistics, University of New Mexico, Albuquerque, NM 87131, USA}
\email{blair@math.unm.edu}

\author[G.A.~Ford]{G.~Austin Ford} \address{Department of Mathematics, Stanford University, Palo Alto, CA  94305, USA}
\email{austin.ford@math.stanford.edu}

\author[J.L.~Marzuola]{Jeremy L.\ Marzuola} \address{Department of Mathematics, University of North
  Carolina, Chapel Hill, NC 27599, USA} \email{marzuola@math.unc.edu}

\begin{abstract}
  We look at the $L^p$ bounds on eigenfunctions for polygonal domains (or more
  generally Euclidean surfaces with conic singularities) by analysis of the
  wave operator on the flat Euclidean cone $C(\mathbb{S}^1_\rho) \defeq
  \mathbb{R}_+ \times \left(\mathbb{R} \big/ 2\pi\rho \mathbb{Z}\right)$ of
  radius $\rho > 0$ equipped with the metric $\h(r,\theta) = \upd r^2 + r^2 \,
  \upd\theta^2$.  Using explicit oscillatory integrals and relying on the fundamental solution to
  the wave equation in geometric regions related to flat wave propagation
  and diffraction by the cone point, we can prove spectral cluster estimates equivalent to those in works on smooth Riemannian manifolds.  
\end{abstract}

\maketitle

% adjusting spacing
\onehalfspace

\section{Introduction}
\label{sec:introduction}

Let $\Omega \subset \RR^2$ be a compact polygonal domain in the
plane, that is, a compact, connected region in $\RR^2$ whose
boundary, $\partial \Omega$, is piecewise linear.  Note, we place no restrictions here on the polygon in terms of convexity or rationality.  Suppose $\{ \phi_j \}$, $\phi_j : \Omega \To
\CC$ is an orthonormal $L^2 (\Omega)$ eigenbasis for the (positive) Laplacian operator on $\Omega$ with either Dirichlet or Neumann boundary conditions on $\partial \Omega$, 
\begin{equation}
  \label{eqn:laplace}
    \begin{aligned}
       \Delta \phi_j &=  \lambda_j^2 \phi_j \ \ 0 \leq \lambda_0 < \lambda_1 \leq \cdots \leq \lambda_j \leq \lambda_{j+1} \leq \cdots, \ \ \| \phi_j \|_{L^2 (\Omega)} = 1.
    \end{aligned}
\end{equation}

We study $L^p$ boundedness properties of the $\phi_j$'s depending upon their frequency, which can be achieved by proving estimates on clusters of eigenfunctions.  There is a rich history of spectral cluster estimates on smooth, closed Riemannian manifolds, classically going back to the work of Avakumovi\v{c}, Levitan, and H\"ormander and more recently in the work of Sogge \cite{sogge}, with many further extensions to manifolds with boundary such as \cite{grieser1992lp,smith1994lp,smith2007p,blair2014strichartz}.  Other extensions to metrics of less regularity can be found in for instance \cite{smith,KST,blair-spclus}.
%, motivated in part by doubling arguments of manifolds with a $C^2$ boundary.  
However, the estimates in the present work appear to be the first on domains with corners or conic singularities except for rectangles.  See for instance the recent work of Bourgain-Demeter \cite{bourgain2014proof}, where restriction estimates on general tori are studied.  Indeed, $L^p$ bounds on the eigenfunctions can be viewed via the Stein-Tomas restriction theorem as a version of the adjoint restriction estimate on the sphere.  The authors have previously treated the analogs of adjoint restriction estimates for polygonal domains in cases of the parabola in \cite{For,BFHM} and the cone in \cite{BFM} by proving Strichartz estimates for the Schr\"odinger equation and wave equation respectively in the setting polygonal domains.  Arguably, the sphere presents unique challenges since Strichartz bounds for the Schr\"odinger and wave equations rely only on fixed time bounds for the corresponding kernel, whereas the spectral cluster bounds typically require integrating/averaging the wave kernel and estimating the contributions of the jumps in the transition from geometric to diffracted wave fronts.

\begin{remark}
The Neumann Laplacian on $\Omega$ is taken to be the Friedrichs extension of the Laplace operator acting on smooth functions which vanish in a neighborhood of the vertices and whose normal derivative is zero on the rest of the boundary. 
The Dirichlet Laplacian is taken to be the typical Friedrichs extension of the Laplace operator acting on smooth functions which are compactly supported in the interior of $\Omega$.
\end{remark}

%We take the space $H^s(\Omega)$ to be the $L^2$-based Sobolev space of order $s$ defined with respect to the spectral resolution of either the Dirichlet or Neumann Laplacian.  More precisely, this self-adjoint operator possesses a sequence of eigenfunctions forming a basis for $L^2(\Omega)$.  We write the eigenfunction and eigenvalue pairs as $\Delta \varphi_j = \l^2_j \varphi_j$, where $\l_j$ denotes the frequency of vibration.  The Sobolev space of order $s$ can then be defined as the image of $L^2(\Omega)$ under $(1+\Delta)^{-s}$ with norm
%\begin{equation}\label{eq:Sobolev defn}
%\left\|f\right\|_{H^s(\Omega)}^2 = \sum_{j=1}^\infty \left( 1 + \l_j^2 \right)^s \left| \left\langle f, \varphi_j \right\rangle \right|^2  .
%\end{equation}
%Here, $\langle \cdot, \cdot \rangle$ denotes the $L^2$ inner product.

The spectral projection operator $\Pi_\lambda$ is defined for any $\lambda \geq 0$ such that
\begin{equation}
\label{eqn:specproj}
\Pi_\lambda f = \sum_{\lambda_j \in [\lambda, \lambda +1 ]}  \langle f , \phi_j \rangle \phi_j.
\end{equation}
We refer to functions in the range of $\Pi_\lambda$ as "spectral clusters."  Then, the desired spectral cluster estimates are stated as the following theorem.

\begin{theorem}
\label{conj:spclus}
For $\Omega$ any polygonal domain in $\RR^2$, $f \in L^2 (\Omega)$, we have
\begin{equation}
\label{e:spclus}
\| \Pi_\lambda f \|_{L^q (\Omega)} \leq C \lambda^{\delta(q)} \| f \|_{L^2 (\Omega)}, \ \ \delta(q) = \left\{
\begin{array}{l}
\frac{1}{2} \left( \frac12 - \frac{1}{q} \right) \ \ \text{for} \ \ 2 \leq q \leq 6, \\
2 \left( \frac12 - \frac1{q} \right) - \frac12 \ \ \text{for} \ \ 6 \leq q \leq \infty
\end{array} \right.
\end{equation}
for $C$ independent of $\lambda \geq 1$.  Consequently, given any $L^2$ normalized eigenfunction $\Delta\phi_\lambda=\lambda^2 \phi_\lambda$ we have
\[
\|\phi_\lambda\|_{L^q(\Omega)} \leq C \lambda^{\delta(q)}.
\]
\end{theorem}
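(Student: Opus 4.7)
The plan is to adapt the Sogge--Stein--Tomas spectral cluster framework to the polygonal setting, replacing the smooth short-time half-wave parametrix with an explicit decomposition that isolates the diffractive contribution from each cone point of $\Omega$. Fix an even Schwartz function $\chi$ with $\chi(0)=1$ and $\hat\chi$ supported in $[-\veps_0,\veps_0]$ for some $\veps_0$ smaller than the distance from any regular point to a vertex neighborhood, and set
\[
\tilde\Pi_\lambda \defeq \chi(\lambda - \sqrt\Delta) = \frac{1}{2\pi}\int \hat\chi(t)\,e^{i\lambda t}\,e^{-it\sqrt\Delta}\,\upd t.
\]
Since $|\chi|\ge c>0$ on $[\lambda,\lambda+1]$, it suffices to prove the estimate for $\tilde\Pi_\lambda$. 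My target is the pointwise kernel bound
\[
\bigl|\tilde\Pi_\lambda(x,y)\bigr| \le C\lambda^{1/2}\bigl(1+\lambda\,\dist(x,y)\bigr)^{-1/2},
\]
from which the range $2\le q\le 6$ of Theorem \ref{conj:spclus} follows from a standard $TT^*$/Young convolution argument, while the range $6\le q\le\infty$ follows by interpolation with the pointwise Weyl bound $\tilde\Pi_\lambda(x,x)\le C\lambda$.

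To obtain the kernel estimate I would partition $\Omega$ into (i) a ``regular'' region staying at distance $\gtrsim \veps_0$ from every vertex and (ii) small neighborhoods of each vertex $v_k$. In region (i) the half-wave propagator is represented by the Euclidean FIO parametrix, with reflections across straight edges handled by the classical unfolding/method of images of \cite{grieser1992lp,smith1994lp,smith2007p}, and stationary phase in $t$ around the geodesic arrival time yields the bound exactly as on smooth Riemannian surfaces. In each vertex neighborhood I would lift to the model cone $C(\bbS^1_{\rho_k})$ and employ the Cheeger--Taylor--Sommerfeld representation of $e^{-it\sqrt\Delta}$ used in the authors' prior Strichartz work \cite{BFM}. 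This representation splits the wave kernel into a \emph{geometric} piece, supported on the portion of the Euclidean light cone reachable from $y$ without touching the vertex, and a \emph{diffractive} piece whose singular support lies on $\{t=r_x+r_y\}$. The geometric piece is treated just as in the Euclidean case; the diffractive piece is an explicit oscillatory integral to be evaluated against $\hat\chi(t)e^{i\lambda t}$.

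The main obstacle, and the place where genuinely new analysis beyond \cite{BFM} is required, is extracting the sharp $\lambda^{1/2}$ profile from the diffractive oscillatory integral. The diffractive phase has a degenerate critical point as the angular coordinates approach the shadow boundary and $r_x\approx r_y$, and a naive stationary-phase application there would cost a factor of $\lambda^{1/2}$. To recover the correct bound I would combine a non-stationary phase argument in the angular variable with a dyadic decomposition in $\lambda\,|t-r_x-r_y|$, using the half-order conormal regularity of the diffracted front to absorb the resulting losses into at worst logarithmic factors, which can in turn be swallowed by shrinking $\veps_0$. Once this diffractive kernel estimate is in place, interpolation and summation over the finitely many vertices of $\Omega$ deliver the full cluster bound, and the eigenfunction estimate is the $f=\phi_\lambda$ specialization.
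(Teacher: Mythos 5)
The central mechanism you propose is incorrect. First, the pointwise kernel bound you target,
\[
\bigl|\tilde\Pi_\lambda(x,y)\bigr| \le C\lambda^{1/2}\bigl(1+\lambda\,\dist(x,y)\bigr)^{-1/2},
\]
is false. In two dimensions the kernel of $\chi(\sqrt{\smash[b]{\Delta}}-\lambda)$ is of size $\lambda^{1/2}\dist(x,y)^{-1/2}$ on the annulus $\dist(x,y)\approx\delta$ where $\widehat\chi$ lives (see \eqref{convokernel1} in the paper: the amplitude $a_{\lambda,\pm}$ is $O(1)$, so the kernel is genuinely of size $\lambda^{1/2}$ there), and on the diagonal the pointwise Weyl asymptotic gives $\tilde\Pi_\lambda(x,x)\approx\lambda$. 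Your claimed bound gives $\lambda^{1/2}$ on the diagonal, contradicting the Weyl bound $\lambda$ that you yourself invoke two sentences later for the $q\ge 6$ range. The correct pointwise envelope is $\lambda\bigl(1+\lambda\,\dist(x,y)\bigr)^{-1/2}$.

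Second, and more fundamentally, even the correct pointwise bound cannot yield the sharp $L^2\to L^6$ estimate. Feeding $|K(z)|\lesssim\lambda\bigl(1+\lambda|z|\bigr)^{-1/2}\mathbf{1}_{|z|\lesssim 1}$ into Young's inequality gives $\|K\|_{L^{3/2}}\approx\lambda^{1/2}$, i.e. $\|K*f\|_{L^6}\lesssim\lambda^{1/2}\|f\|_{L^2}$, which loses a factor $\lambda^{1/3}$ against the target $\lambda^{1/6}$. The $L^6$ gain comes from the \emph{oscillation} $e^{i\lambda\dist(x,y)}$ together with the curvature of the level sets of $\dist(x,y)$ (the Carleson--Sj\"olin condition), exactly as in the Stein--Tomas restriction theorem; no amount of pointwise decay alone recovers it. This is why the paper, after isolating the leading kernel \eqref{convokernel}, invokes H\"ormander's oscillatory integral theorem \cite{HorOsc} for the $\RR^2$ and geometric pieces, and then builds a bespoke $TT^*$ argument (with lower bounds \eqref{lowerbound2nd} on the phase derivatives and Hardy--Littlewood--Sobolev in the radial variable) to handle the jump at the shadow boundary $|\theta_1-\theta_2|=\pi$.

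Third, the treatment you sketch for the diffractive kernel is too vague to assess, but the specific remedy you name will not work: a dyadic decomposition in $\lambda|t-r_x-r_y|$ with logarithmic losses ``swallowed by shrinking $\veps_0$'' is not a valid mechanism, since $\veps_0$ only controls the time support of $\widehat\chi$ and cannot absorb $\log\lambda$ factors. The paper's route through the diffractive piece is genuinely different and is where most of the new work lies: it approximates the kernel by the conjugate Poisson kernel $Q_s(\theta)=\pi\theta/(s^2+\theta^2)$ modulated by $e^{i\lambda r_1r_2 s^2}$ (equation \eqref{Hdef}), computes the resulting Fourier multiplier exactly in terms of a Kummer function to extract cancellation (Lemma~\ref{lem:fouriermultbd}), and then runs a Littlewood--Paley square function argument in the angular variable. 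You should look at that argument; the decay of the multiplier $\widehat{H}(r_1,r_2,\xi)$ in $|\xi|$ is the substitute for the missing nonstationary-phase gain near the shadow. The basic architecture you propose (reduce to $\chi(\lambda-\sqrt\Delta)$, double or localize near vertices, split geometric/diffractive via Cheeger--Taylor) is sound and matches the paper, but the estimates driving the $L^6$ endpoint are not obtainable by the Young/pointwise route.
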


As in \cite{BFHM,BFM}, we will in reality establish Theorem \ref{conj:spclus} for a Euclidean surface with conical singularities (ESCS).  When the cone angle is a rational multiple of $\pi$, this has a special type of orbifold structure.  An ESCS is a Riemannian surface $(X,\g)$ that can be covered by a finite number of coordinate charts, each of which is isometric to a subset of $\bbR^2$ or $\cone$.   Let $\cone$ denote the Euclidean cone of radius
$\rho>0$, defined as the product manifold
$\cone=\RR_+ \times\left(\mathbb{R} \big/ 2\pi\rho \mathbb{Z}\right)$, equipped
with the metric $\g(r,\theta) = \upd r^2 + r^2 \, \upd\theta^2$.  This is an
incomplete manifold which is locally isometric to $\RR^2$ away from the cone points and hence flat.  For a more precise definition, see \cite{BFHM}.  Even though the manifolds we consider have conic singularities, the power $\delta (q)$ that appears in Theorem \ref{conj:spclus} is the same as that in Sogge's original estimates for spectral clusters  on $C^\infty$ manifolds \cite{sogge}.  The same work shows that this is the sharp exponent for spectral clusters on any Riemannian manifold, though this exponent may not be optimal for individual eigenfunctions.

Any compact planar polygonal domain $\Omega$ can be doubled across its boundary to produce a compact ESCS.  In this procedure, a vertex of $\Omega$ of angle $\alpha$ gives rise to a conic point of $X$ with cone angle $2\alpha$.  We then take the Laplace-Beltrami operator, $\Delta_\g$ on $X$ to be the Friedrichs extension of the Laplacian on $\mathcal{C}^\infty_c(X_0)$, where $X_0$ is $X$ less the singular points.    To see this clearly, let us recall the procedure outlined in  \cite[Section 2]{BFHM}.  Begin with two copies $\Omega$ and $\sigma \Omega$ of the polygonal domain, where $\sigma$ is a reflection of the plane.  An ESCS $X$ is then obtained by taking the formal union $\Omega \cup \sigma \Omega$, where two corresponding sides are identified pointwise.  Taking polar coordinates near each vertex of the polygon, it can be seen that the flat metric $\g$ extends smoothly across the sides.  In particular, a vertex in $\Omega$ of angle $\alpha$ gives rise to a conic point of $X$ locally isometric to $C(\mathbb{S}^1_\rho)$ with $\rho = \frac{\alpha}{\pi}$.  Such a doubling procedure produces a conic point of angle $2 \alpha$.

The reflection $\sigma$ of $\Omega$ gives rise to an involution of $X$ commuting with the Laplace-Beltrami operator.  We thus have a decomposition into two operators acting on functions which are either odd (even) with respect to $\sigma$, which are equivalent to the Laplace operator on $\Omega$ with Dirichlet (Neumann) boundary conditions respectively.  For us, the key observation is that for any eigenfunction $\varphi_j$ of the Dirichlet, resp.\ Neumann, Laplace operator on $\Omega$, we can construct an eigenfunction of the Laplace operator on $X$ by taking $\varphi_j$ in $\Omega$ and $-\varphi_j \circ \sigma$, resp. $\varphi_j \circ \sigma$, in $\sigma \Omega$.  As a consequence, the spectrum over $X$ can be seen to extend that for $\Omega$.     See the previous works of the authors \cite[Section 2]{BFHM} for a thorough description of ESCSs and $\Delta_\g$, as well as \cite[Section 2]{BFM} for a general treatment of Cheeger's functional calculus on cones.  

Theorem \ref{conj:spclus} then follows from the equivalent statement for ESCSs.
\begin{theorem}
  \label{conj:spclusX}
  For $X$ any compact ESCS, $f \in L^2 (X)$, we have
  \begin{equation}
    \label{e:spclusX}
    \| \Pi_\lambda f \|_{L^q (X)} \leq C \lambda^{\delta(q)} \| f \|_{L^2 (X)}, \ \ \delta(q) = \left\{
      \begin{array}{l}
        \frac{1}{2} \left( \frac12 - \frac{1}{q} \right) \ \ \text{for} \ \ 2 \leq q \leq 6, \\
        2 \left( \frac12 - \frac1{q} \right) - \frac12 \ \ \text{for} \ \ 6 \leq q \leq \infty
      \end{array} \right.
  \end{equation}
  for  $C$ independent of $\lambda$.
\end{theorem}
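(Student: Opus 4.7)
The first step is to dominate the sharp projector $\Pi_\lambda$ by a smoothed variant $\chi_\lambda \defeq \chi\bigl(\lambda - \sqrt{\Delta_\g}\bigr)$, with $\chi \in C_c^\infty(\RR)$ satisfying $\chi \geq 1$ on $[0,1]$ and $\widehat{\chi}$ supported in $[-T_0, T_0]$ for a sufficiently small $T_0 > 0$ depending only on the injectivity data of $X$.  Fourier inversion gives
\begin{equation*}
  \chi_\lambda \;=\; \frac{1}{2\pi}\int_{-T_0}^{T_0} e^{i\lambda t}\,\widehat{\chi}(t)\, e^{-it\sqrt{\Delta_\g}}\,\upd t,
\end{equation*}
reducing the entire problem to understanding the half-wave (equivalently, the even wave) propagator for $|t|\leq T_0$.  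Since $\delta(q)$ is piecewise linear in $1/q$ with a kink at $q=6$, it suffices by interpolation with the trivial $L^2 \to L^2$ bound to prove the two endpoint estimates $\|\chi_\lambda\|_{L^2 \to L^\infty} \lesssim \lambda^{1/2}$ and $\|\chi_\lambda\|_{L^2 \to L^6} \lesssim \lambda^{1/6}$; by $T^*T$ each of these reduces to a bound on the kernel of $\chi_\lambda^* \chi_\lambda$.

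Finite propagation speed, together with a partition of unity subordinate to a finite cover of $X$ by charts isometric either to subsets of $\RR^2$ or to subsets of $\cone$, localizes both sides to two regimes.  In the \emph{flat regime}, where both $x$ and $y$ lie in a Euclidean chart and the wave front does not reach any cone tip within time $T_0$, the local wave kernel coincides with the Euclidean one (modulo boundary reflections treated by the method of images), and Sogge's classical arguments for flat $\RR^2$ apply verbatim to give the Sogge bounds.  The substantive case is the \emph{diffractive regime}, where $x$ or $y$ is close to a conic singularity.  There one invokes Cheeger--Taylor's functional calculus on $\cone$, as developed in the authors' companion paper \cite{BFM}, to split
\begin{equation*}
  \cos\bigl(t\sqrt{\Delta_\g}\bigr)(x,y) \;=\; E_{\text{geom}}(t,x,y) \,+\, E_{\text{diff}}(t,x,y),
\end{equation*}
where $E_{\text{geom}}$ is a finite sum of flat wave kernels, one per broken-geodesic homotopy class through the cone of length $\leq T_0$, and $E_{\text{diff}}$ is the diffracted wave emitted from the tip, supported in $\{\,t \geq r + r'\,\}$.

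The contribution of $E_{\text{geom}}$ is handled exactly as in the flat case: $\chi_\lambda^* \chi_\lambda$ inherits the Sogge oscillatory-integral structure with phase $\lambda(t \pm d_j(x,y))$ for each path, and stationary phase in $t$ yields the sharp bound.  For $E_{\text{diff}}$ we use its explicit Hankel--Bessel representation, which after partial summation in the angular Fourier index becomes an oscillatory integral whose amplitude degenerates like $\bigl(t^2 - (r+r')^2\bigr)^{-1/2}$ on the diffractive front.  The principal obstacle, and the essential novelty beyond the smooth setting, is the analysis through this singular front: we split the $t$-integral into a transition zone $|t-(r+r')| \lesssim \lambda^{-1}$, on which the bound follows by direct integration of the locally integrable amplitude against $|\widehat{\chi}|^2$, and a far zone $|t-(r+r')| \gg \lambda^{-1}$, on which van der Corput or repeated integration by parts in $t$ supplies the missing $\lambda$-decay.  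A further dyadic decomposition of the angular variable $\theta-\theta'$ isolating the geometric/shadow boundary is needed to obtain the sharp $q = 6$ bound.  Once the $L^\infty$ and $L^6$ endpoints are patched together across all charts and interpolated with the trivial $L^2$ estimate, \eqref{e:spclusX} follows across the full range $2\leq q\leq \infty$.
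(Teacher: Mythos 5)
Your high-level skeleton matches the paper: reduce to $\chi(\sqrt{\Delta_\g}-\lambda)$ with $\widehat\chi$ compactly supported away from the origin, localize via finite propagation speed to charts isometric to $\RR^2$ or to a flat cone, split the cone's wave kernel into geometric and diffractive pieces via Cheeger--Taylor, and prove the $L^\infty$ and $L^6$ endpoints by $TT^*$ and interpolate. However, your treatments of both the geometric and the diffractive pieces on the cone have gaps.

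For the geometric piece you assert it is ``handled exactly as in the flat case.'' This is not so: the geometric kernel on the cone is a finite sum of translates of the free kernel, each \emph{sharply} cut off to $|(\theta_1-\theta_2)+2\pi\rho j|\leq\pi$. That characteristic function introduces a jump at the shadow boundary, so the resulting oscillatory integral operator is not of Carleson--Sj\"olin type on the nose and H\"ormander's theorem does not apply verbatim. The paper isolates this and runs a bespoke $TT^*$ argument: it writes the kernel of $T^{\lambda,r_1}_{r_2}\circ(T^{\lambda,r_1}_{\tilde r_2})^*$ explicitly, proves the pointwise lower bound
$|\partial_\theta\Psi_{r_1}|+|\partial_\theta^2\Psi_{r_1}|\gtrsim |r_2-\tilde r_2| r_1^2$
on the phase over the truncated $\theta$-interval, and then uses a stationary-phase estimate that is uniform in the location of the critical point relative to the integration endpoints, followed by Hardy--Littlewood--Sobolev in $r_2$. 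That lower bound, and the fact that the $L^1\to L^\infty$ and $L^2\to L^2$ pieces match, are nontrivial and not derivable from ``Sogge's classical arguments.''

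For the diffractive piece your plan --- split the $t$-integral into $|t-(r+r')|\lesssim\lambda^{-1}$ versus $\gg\lambda^{-1}$, apply van der Corput in $t$, and dyadically decompose in $\theta-\theta'$ --- does not obviously close. The paper instead integrates out $t$ first (producing the Bessel $J_0$ identity and an $s$-integral with amplitude $\tfrac{\sin(\theta/\rho)}{\cosh(s/\rho)-\cos(\theta/\rho)}$ and phase $\lambda D(r_1,r_2,s)$), and the governing structure is the \emph{degenerate} stationary point at $s=0$ where $D(r_1,r_2,s)-(r_1+r_2)\sim r_1r_2 s^2/(r_1+r_2)$. It then approximates the kernel by a Fresnel integral times the conjugate Poisson kernel $Q_s(\theta)=\pi\theta/(s^2+\theta^2)$, computes the Fourier multiplier in $\theta$ explicitly via a Kummer function and its large-argument asymptotics (yielding the sharp two-regime bound $\min\{(\lambda r_1 r_2)^{-1/2}, |\xi|^{-1}\}$), and runs a Littlewood--Paley decomposition \emph{in the frequency dual to $\theta$}, not in $\theta$ itself. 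Whether your $t$-decomposition can reproduce the sharp $\lambda^{-1/3}$ gain at $q=6$ is unclear; it omits the quadratic degeneracy in $s$ and the $r_1, r_2$-dependence of the Fresnel scale, which is what makes the fractional integration in $r_2$ work out. So while your outline points at the right objects, it would need the paper's specific multiplier/L--P machinery (or a genuinely new substitute) to actually deliver the $L^6$ endpoint.

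A minor point: the ESCS $X$ is the boundaryless doubled surface, so there are no ``boundary reflections treated by the method of images'' inside a Euclidean chart of $X$; the doubling has already encoded Dirichlet/Neumann reflections as even/odd symmetry of eigenfunctions under the involution.
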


\subsection{Obtaining Spectral Cluster Estimates}
\label{sec:spectral-cluster}

As is well understood and explored below (see also the result from \cite{Sog}),
Theorem \ref{conj:spclus} can be related to forming an oscillatory integral which integrates the wave kernel in time on the Euclidean cone.    In order to pursue such estimates, we will consider the fundamental solution of the wave equation on the Euclidean cone,
\begin{equation}
  \label{eqn:wave}
  \left\{
    \begin{aligned}
      \left(D_t^2 - \Delta_\g \right) u(t,r,\theta) &= F \\
      u(0,r,\theta) &= f(r,\theta) \\
      \prtl_t u (0,r,\theta) &= g(r,\theta) \\
    \end{aligned}
  \right.
\end{equation}
for $u:\RR \times \cone \to \RR$.
A pioneering work regarding the fundamental solution to the wave equation on manifolds with conic singularities is
that of Cheeger and Taylor~\cite{CheTay1,CheTay2} who studied the propagation of
singularities for solutions amongst other properties.  Further progress on the
regularity of the fundamental solution was made by Melrose and Wunsch
in~\cite{melrosewunsch}.  Let us recall from \cite{CheTay2}, Section $4$ and \cite{BFM}, Equations $(3.14)-(3.16)$ that the wave fundamental solution kernel for $\sin(t\sqrt{ \smash[b]{ \Delta_g } })/\sqrt{ \smash[b]{ \Delta_g } }$ on the cone can be written as a decomposition of a geometric component,
\begin{multline}
\label{geowk}
K^{\text{geom}} (t,r_1,\theta_1;r_2,\theta_2)  = \\
 \sum_{-\pi \leq (\theta_1 -\theta_2) +j \cdot 2 \pi \rho \leq
    \pi}\frac{1}{(t^2 -r_1^2-r_2^2+2r_1r_2 \cos ((\theta_1-\theta_2) +j \cdot 2 \pi \rho)
    )^{\frac 12}_+} ,
\end{multline}
and a diffracted component,
\begin{multline}\label{diffwk}
K^{\text{diff}}(t, r_1, \theta_1;r_2, \theta_2) = 
  -\frac{\mathbf{1}_{(0,t)}(r_1+r_2)}{4\pi^2 \rho(2r_1r_2)^{\frac 12}} \\
  \mbox{} \times
  \int_0^\beta \left( \alpha -\cosh s \right)^{-\frac 12}\left[ \frac{\sin
      \varphi_1}{\cosh(s/\rho)-\cos \varphi_1} + \frac{\sin
      \varphi_2}{\cosh(s/\rho)-\cos \varphi_2} \right]\;ds,
\end{multline}
where we have used the abbreviations
\begin{equation*}
  \alpha = \frac{t^2-r_1^2-r_2^2}{2r_1 r_2} = \frac{t^2-(r_1+r_2)^2}{2r_1 r_2}
  +1, \qquad \qquad \beta = \cosh^{-1}(\alpha),
\end{equation*}
\begin{equation*}
  \varphi_1 = \frac{\pi + \theta_1 -\theta_2}{\rho}, \qquad \varphi_2 =
  \frac{\pi -(\theta_1 -\theta_2)}{\rho}.
\end{equation*}

\begin{remark}
As shown in \cite{burq2008global,smith2007p,smith}, spectral cluster estimates are equivalent to proving a dispersive estimate that holds
on the representative geometry of each coordinate patch of the domain
$\Omega$.  Namely, using Fourier analysis in the $t$-variable, spectral cluster estimates can be related to dispersive estimates for a solution to the wave equation on an ESCS, $X$,
\begin{equation}
  \label{eqn:waveX}
  \left\{
    \begin{aligned}
      \left(D_t^2 - \Delta_\g \right) u(t,x) &= F \\
      u(0,x) &= f(x) \\
      \prtl_t u (0,x) &= g(x). \\
    \end{aligned}
  \right.
\end{equation}
To be more precise, Theorem \ref{conj:spclusX} on a Riemannian
manifold, $M$, is equivalent to the dispersive-type estimate
\begin{equation}
  \label{e:sqfuncest1}
  \| u \|_{L^q_x ( M; L^2_t [-T,T])} \leq C \left( \| (f,g) \|_{H^{\delta(q)} \times H^{\delta(q)-1}} + \| F \|_{L^1_t ( [-T,T]; H^{\delta (q) -1) )}} \right)
\end{equation}
for $u$ a solution to \eqref{eqn:waveX}, see \cite{smith,burq2008global}.
Note that these estimates are typically associated with a measure of decay away from the light cone and hence
differ in form from the standard Strichartz estimates which capture dispersive
decay.  See \cite{BFM} for more on Strichartz estimates in this setting as well.  In addition, $L^p$ regularity for wave operators on product cones and their applications to $L^p$ bounds for spectral multipliers have been studied in \cite{mulseeg}.
 \end{remark}

The proof of Theorem \ref{conj:spclusX} will follow once we derive proper representations of the
spectral projection operators as oscillatory integrals.
One proof of \eqref{e:spclus} on $\RR^2 $ begins by first observing (cf. p.130, 137 in \cite{Sog}) that one may
replace $\Pi_\lambda$ by $\chi(\sqrt{ \smash[b]{ \Delta_g } }-\lambda)$ with
$\chi \in \mathcal{S}(\RR)$ even and real-valued, with $\chi >0$ in a neighborhood of 0, and
$\supp(\widehat{\chi}) \subset \{ |t| \in (\delta,2\delta) \}$ for some $\delta >0$.  Note, here we are considering the wave operator on $\RR^2$ but similar approaches work on more general manifolds.  It can
then be seen that the Schwartz kernel of $\chi(\sqrt{\Delta}-\lambda) $ is a
convolution kernel, which as a function of $z$ is of the form
\begin{equation}\label{convokernel1}
  \lambda^{\frac 12} \sum_\pm e^{\pm i\lambda|z|}a_{\lambda,\pm}(|z|) + R_\lambda(z)
\end{equation}
where $a_{\lambda,\pm}(\cdot)$ is compactly supported in $(\delta/2, 4\delta)$
and $R_\lambda(z)$ satisfies much better bounds than is needed:
$|\prtl^\alpha_z R_\lambda(z)| \lesssim_{N,\alpha} \lambda^{-N}$.  The phase
function $|x-y|$ is a Carleson-Sj\"olin phase, so the desired
$L^2(\RR^2) \to L^6(\RR^2)$ bounds then follow from oscillatory integral estimates in \cite{HorOsc}.   For a generalization of this result to higher dimensions, see for instance
Stein's variable coefficient
generalization of the Stein-Tomas restriction theorem (see e.g. Corollary 2.2.3
in \cite{Sog}).

The easiest way to see \eqref{convokernel1} is to write the Schwartz kernel as a
Fourier integral in polar coordinates
$$
\int_0^\infty \left(\int_0^{2\pi} e^{irz\cdot \theta} d\theta \right)
\chi(r-\lambda)r\,dr.
$$
Stationary phase shows that
$$
\int_0^{2\pi} e^{irz\cdot \theta} d\theta =|rz|^{-\frac 12} \sum_\pm e^{\pm
  ir|z|}a_\pm(r|z|)
$$
where $a_\pm$ are smooth and bounded.  When $|z| \in (\delta/2, 4\delta)$,
\eqref{convokernel} follows by using that the fact that $\chi$ is Schwartz
allows one to essentially replace $r$ by $\lambda$.  Seeing the rapid decay in
$\lambda$ when $|z| \notin (\delta/2, 4\delta)$ takes some extra work.  In short,
one has to replace $\chi$ by its Fourier transform, but we will see it by a
different method below in Section \ref{sec:clust-estim-polyg}.  

%For now, we
%simply state the main result.
%\begin{theorem}
%  \label{thm:FIO-type}
%  We can express both the geometric and diffracted components of the solution
%  $u$ to \eqref{eqn:wave} in terms of fundamental solutions to the half-wave
%  operator on the flat cone with convolution kernels of the form
%  \begin{equation}
%    \label{eqn:geomFIO}
%    \chi_{\text{geometric}}  (\sqrt{\Delta}-\lambda) \ : \  \lambda^{\frac 12} \sum_\pm e^{\pm i\lambda|z|_g}a_{\lambda,\pm}(|z|_g) + R^g_\lambda(z)
%  \end{equation}
%  and
%  \begin{equation}
%    \label{eqn:diffFIO}
%    \chi_{\text{diffracted}} (\sqrt{\Delta}-\lambda) \ : \  \lambda^{\frac 12} \sum_\pm e^{\pm i\lambda|z|_g}a_{\lambda,\pm}(|z|_g) + \sum_\pm e^{\pm i\lambda|z|_d} b_{\lambda,\pm}(|z|_d) + R^d_\lambda(z)
%  \end{equation}
%  respectively, where $a$ is compactly supported, but $b$ is simply smooth and decaying.
%  % In addition, away from the interface of the diffracted and geometric wave,
%  % we actually have
%  % \begin{equation}
%  %   \label{eqn:diffFIO-alt}
%  %   \chi_{\text{diffracted} \setminus \text{geometric}}
%  %   (\sqrt{\Delta}-\lambda) \ : \ \sum_\pm e^{\pm
%  %   i\lambda|z|_d}a_{\lambda,\pm}(|z|_d) + R^d_\lambda(z).
%  % \end{equation}
%\end{theorem}
%\jm{Needs to be stated more precisely.  Do we want a theorem showing the nature of the integral kernels or just the spectral cluster estimates??  }

Such a representation of the fundamental solution generally allows one to
establish the $L^6$ bounds we desire.  In the case of the geometric
wave, we will observe that the leading order fundamental solution representation has the correct form of a Carleson-Sj\"olin phase, and
the result holds from standard arguments.  The diffracted component presents a different challenge in that the phase function is not of the desired form, thus we need a modified argument to get the correct decay.

\subsection*{Acknowledgement.} MDB is supported by NSF grant DMS-1301717.  GAF is supported by NSF Postdoctoral Fellowship grant DMS-1204304. JLM was supported by NSF Grant DMS--1312874 and NSF CAREER Grant DMS--1352353.  The authors are grateful to Andrew Hassell for helpful conversations and to Tadahiro Oh for pointing out the importance of $L^\infty$ eigenfunction estimates in establishing Gibbs measures, which led the authors down the path of beginning to prove spectral cluster estimates as a first step towards such a goal.

%%% Local Variables: 
%%% mode: latex
%%% TeX-master: "bfm-polygon_clusters"
%%% End: 

% bfm-polygon_clusters-4.tex

\section{Spectral cluster estimates on polygonal domains}
\label{sec:clust-estim-polyg}

\subsection{Treatment of the geometric term}
Let $X$ be an ESCS of dimension 2. We are interested in establishing the bound
\begin{equation}
\label{specpbds}
\|\Pi_\lambda\|_{L^2(X) \to L^p(X)} \lesssim \lambda^{\max(\frac 14-\frac 1{2p},\frac 12-\frac 2p)}
\end{equation}
where $\Pi_\lambda$ projects onto eigenspaces corresponding to frequencies $\lambda_j$ satisfying $\lambda_j \in [\lambda, \lambda+1]$.  Note that this is a discrete analog of the Fourier multiplier determined by the symbol $\mathbf{1}_{[\lambda, \lambda+1]}(\xi)$ on $\RR^2$.  As noted above, (cf. \cite[p.130, 137]{Sog}) that it suffices to prove this replacing $\Pi_\lambda$ by $\chi(\sqrt{ \smash[b]{ \Delta_g } }-\lambda)$, where $\chi \in \mathcal{S}(\RR)$, is even and real valued and  $\chi >0$ in a neighborhood of 0 with $\supp(\widehat{\chi}) \subset \{|t| \in(\delta, 2\delta)\}$ for some $\delta >0$. Hence
\begin{align*}
\chi(\sqrt{ \smash[b]{ \Delta_g } }-\lambda) &= \frac{1}{2\pi}\int e^{it(\sqrt{ \smash[b]{ \Delta_g } }-\lambda)}\widehat{\chi}(t)\,dt\\
& = \frac{1}{\pi}\int_{-\infty}^\infty e^{-it\lambda}\cos(t\sqrt{ \smash[b]{ \Delta_g } })\widehat{\chi}(t)\,dt +\tilde{\chi}(\sqrt{ \smash[b]{ \Delta_g } }+\lambda)
\end{align*}
where $\tilde{\chi}$ is some other Schwartz class function.  Since the spectrum of $\sqrt{ \smash[b]{ \Delta_g } }$ is positive, $\tilde{\chi}(\sqrt{ \smash[b]{ \Delta_g } }+\lambda)$ is a rapidly decaying function of an elliptic operator, and hence $\|\tilde{\chi}(\sqrt{ \smash[b]{ \Delta_g } }+\lambda)\|_{L^2(X)\to L^p(X)} =O(\lambda^{-N})$ for any $N>0$. Consequently it suffices to restrict attention to the operator valued integral here.

Integration by parts yields
\begin{multline}\label{sineformula}
\int_{-\infty}^\infty e^{-it\lambda}\cos(t\sqrt{ \smash[b]{ \Delta_g } })\widehat{\chi}(t)\,dt \\=
i\lambda\int_{-\infty}^\infty e^{-it\lambda}\frac{\sin(t\sqrt{ \smash[b]{ \Delta_g } })}{\sqrt{ \smash[b]{ \Delta_g } }}\widehat{\chi}(t)\,dt
-\int_{-\infty}^\infty e^{-it\lambda}\frac{\sin(t\sqrt{ \smash[b]{ \Delta_g } })}{\sqrt{ \smash[b]{ \Delta_g } }} \widehat{\chi}'(t)\,dt.
\end{multline}
By Sobolev embedding, the operator defined second term here satisfies stronger $L^2(X)\to L^p(X)$ bounds than needed, so we may also neglect its contribution. We further note that since $\widehat{\chi}$ is even, the first term on the right can be rewritten as
\begin{equation}\label{evenintegral}
\lambda\int_{-\infty}^\infty \sin(t\lambda)\frac{\sin(t\sqrt{ \smash[b]{ \Delta_g } })}{\sqrt{ \smash[b]{ \Delta_g } }}\widehat{\chi}(t)\,dt
=2\lambda\int_{0}^\infty \sin(t\lambda)\frac{\sin(t\sqrt{ \smash[b]{ \Delta_g } })}{\sqrt{ \smash[b]{ \Delta_g } }}\widehat{\chi}(t)\,dt.
\end{equation}
By finite speed of propagation, the Schwartz kernel of this operator thus vanishes when the distance between the two points on $X$ is larger than $2\delta$.  See \cite{BFM}, $(1.16)$ or \cite{CheTay1}, $(3.41)$ for a complete definition of this notion of distance on the cone.  Consequently, it suffices to prove $L^2 \to L^p$ bounds for data supported in a chart where $X$ can be identified with a flat cone, $\cone$.  Moreover, using the fact that the wave kernels respect periodicity, by a doubling argument, if the bounds hold when the radius is $\rho$, then they also hold when the radius is $\rho/2$.  We may thus assume that $\rho>1$ (recalling that the $\rho=1$ follows simply by identification with $\RR^2$, see the treatment below).

We finally remark that it suffices to establish $p=\infty$ and $p=6$ bounds on the operator in \eqref{evenintegral} as the remaining bounds will follow from interpolation.

\subsubsection{The Schwartz kernel of \eqref{evenintegral} on $\RR^2$} We begin by computing the Schwartz kernel of the operator in \eqref{evenintegral} when $X=\RR^2$ and $\Delta_g=\Delta$ is the standard Laplacian on $\RR^2$.  While this can be accomplished by employing the methods in \cite{Sog}, we include an alternative presentation as it can be used to help treat the ``geometric" contribution below.  In particular, we will only use that the fundamental solution of the wave equation is of the form $\frac{1}{2\pi}(t^2-|z|^2)_+^{-\frac 12}$.  It will be seen that the Schwartz kernel for the integral in \eqref{evenintegral}  is a convolution kernel, which as a function of $z$, is of the form
\begin{equation}\label{convokernel}
\Re \left(\lambda^{\frac 12} e^{i\lambda|z|}a_{\lambda}(|z|) \right)+ R_\lambda(z)
\end{equation}
where $a_{\lambda}(\cdot)$ is compactly supported and smooth in $(\delta, 2\delta)$ and $R_\lambda(z)$ satisfies stronger $L^2(X) \to L^p(X)$ bounds.   In particular, $R_\lambda(z)$ is $O(\lambda^{-N})$ for any $N$.   The bounds when $p=\infty$ are then immediate.  Moreover, the phase function $|x-y|$ is a Carleson-Sj\"olin phase, so the desired $L^2(X) \to L^6(X)$ bounds then follow from H\"ormander \cite{HorOsc} as stated in the Introduction.

The kernel of the operator in \eqref{evenintegral} is a convolution kernel, and neglecting harmless constants, this as a function of $z$ is given by:
\begin{align}\label{fourierfund}
\lambda\int_{|z|}^\infty \sin(t\lambda)&(t^2-|z|^2)^{-\frac 12}_+\widehat{\chi}(t)\,dt  \\
&=\lambda\int_{-\infty}^\infty \chi(\tau)\int_{|z|}^\infty \sin(t\lambda)\cos(t\tau)(t^2-|z|^2)^{-\frac 12}_+\,dt d\tau\notag\\
&=\frac{\lambda}{2}\int_{-\infty}^\infty \chi(\tau)\int_{|z|}^\infty \left(\sin(t(\lambda+\tau))+\sin(t(\lambda-\tau))\right)(t^2-|z|^2)^{-\frac 12}_+\,dt d\tau \notag\\
&=
\lambda\int_{-\infty}^\infty \chi(\tau)\int_{|z|}^\infty \sin(t(\lambda-\tau))(t^2-|z|^2)^{-\frac 12}_+\,dt d\tau. \notag
\end{align}
Here the second equality follows from trigonometric identities and the first and third equalities use that $\chi(\tau)$ is even.  Now observe that after a change of variables $t=s|z|$ and the identities \cite[p.170]{Wat}, we have
\begin{equation}\label{besselidentity}
\frac{2}{\pi}\int_{|z|}^\infty \sin(t\zeta)(t^2-|z|^2)^{-\frac 12}_+\,dt=\frac{2}{\pi}\int_{1}^\infty \sin(s|z|\zeta)(s^2-1)^{-\frac 12}_+\,ds=
\sgn(\zeta) J_0(|z\zeta|)
\end{equation}
where $J_0$ is the Bessel function of order 0.  Neglecting harmless constants once again, we are now led to consider
\begin{equation}\label{besselconvo}
\lambda\int_{-\infty}^\infty \chi(\tau)\sgn(\lambda-\tau)J_0(|z||\lambda-\tau|) \,d\tau .
\end{equation}

Let $\psi$ be a smooth, even bump function such that $\supp(\psi) \subset (-\frac 12,\frac 12)$ and $\supp(1-\psi) \subset (-\frac 14, \frac 14)^c$, and observe that
\[
\lambda\int_{-\infty}^\infty \chi(\tau)(1-\psi)(\lambda^{-1}\tau)\sgn(\lambda-\tau)J_0(|z||\lambda-\tau|) \,d\tau = O(\lambda^{-N})
\]
for any $N>0$ given that $\chi(\tau)$ is a Schwartz class function and $J_0$ is bounded.  Note that this relation is uniform in $z$.  Consequently, since $\lambda -\tau \geq \lambda/2$ when $\lambda^{-1} \tau \in \supp(\psi)$, we may restrict attention to the contribution of
\begin{equation}\label{besselconvomain}
\lambda\int_{-\infty}^\infty \chi(\tau)\psi(\lambda^{-1}\tau)J_0(|z|(\lambda-\tau)) \,d\tau .
\end{equation}

Typical stationary phase arguments imply that for $\zeta \in (0,\infty)$,
\[
J_0(\zeta) = \Re\left( e^{i\zeta}b(\zeta) \right),
\]
where the $k$-th derivative of $b$ satisfies
\[
|b^{(k)}(\zeta)| \lesssim_k (1+\zeta)^{-k-\frac 12}, \qquad k \geq 0.
\]
We thus rewrite \eqref{besselconvomain} as
\begin{equation}\label{besselintegration}
\Re\left(\lambda e^{i\lambda|z|}\int_{-\infty}^\infty e^{-i\tau|z|} \chi(\tau)\tilde{\psi}(\lambda^{-1}\tau,\lambda|z|)\,d\tau \right).
\end{equation}
where for $\tau \in \RR$ and $w\geq 0$,
\[
\tilde{\psi}(\tau,w) \defeq b(w(1-\tau))\psi(\tau).
\]
Recalling that $\supp(\psi) \subset (-\frac 12,\frac 12)$, we have
\[
| \prtl^{j}_\tau \prtl^{k}_w \tilde{\psi}(\tau,w) | \lesssim_{j,k} (1+w)^{-\frac 12 -k} \qquad j,k \geq 0 .
\]
Consequently, the Fourier integral in \eqref{besselintegration} is
\begin{equation}\label{alambdageom}
\int_{-\infty}^\infty \widehat{\chi}(s)\widehat{\tilde{\psi}}(\lambda(|z|-s),\lambda|z|)\lambda ds,
\end{equation}
which is seen to be smooth in $|z|$ with derivatives which are $O(\lambda^{-\frac 12})$ and is $O(\lambda^{-N})$ for any $N>0$ when $|z| \notin (\delta,2\delta)$ (since $\supp(\widehat{\chi}) \subset (\delta,2\delta)$).  This establishes \eqref{convokernel} and in turn \eqref{specpbds}.

\subsubsection{The geometric contribution on a flat cone}
Let us recall from \eqref{geowk} that the ``geometric" contribution to $\frac{\sin(t\sqrt{ \smash[b]{ \Delta_g } })}{\sqrt{ \smash[b]{ \Delta_g } }}$ on $C(\mathbb{S}^1_\rho)$ when $\rho >1$ has a Schwartz kernel of the form (neglecting harmless constants as before)
\[
\left( t^2-G^2 (r_1,r_2;\theta_1-\theta_2)\right)^{-\frac 12}_+, \text{ where } G(r_1,r_2;\theta) \defeq (r_1^2+r_2^2-2r_1r_2\cos\theta)^{\frac 12},
\]
and supported in $|(\theta_2-\theta_1)\mod{ 2 \pi \rho}| \leq \pi$.  Consequently, given two points $(r_1,\theta_1)$, $(r_2,\theta_2)$ such that $|\theta_1-\theta_2|\leq \pi\rho$, the previous subsection shows that the leading order contribution of this term in \eqref{evenintegral} gives rise to the (real part of the) kernel
\[
K(r_1,r_2;\theta_1-\theta_2) \defeq \mathbf{1}_{[-\pi,\pi]}(\theta_1-\theta_2)
e^{i\lambda G(r_1,r_2;\theta_1-\theta_2)}a_\lambda(G(r_1,r_2;\theta_1-\theta_2)),
\]
and we recall that $\supp(a_\lambda)\subset (\delta,2\delta)$.  Note that the factor of $\lambda^{\frac12}$ is not included here and we will thus show that this integral operator contributes to a gain of $\lambda^{-\frac 2p}$ in the $L^2 \to L^p$ estimates for $p=6,\infty$.  For
\[
\supp( g) \subset \{(r,\theta)\in C(\mathbb{S}^1_\rho): r \in (0,4\delta)\}, \]
we have the straightforward $L^\infty $ bound
\[
\sup_{(r_2,\theta_2)}\left| \int K(r_1,r_2;\theta_2-\theta_1) g(r_1,\theta_1)r_1dr_1d\theta_1\right| \lesssim \|g\|_{L^2(r_1dr_1d\theta_1)}.
\]
Note that without loss of generality, we can always assume localization of $g$ this throughout the proof of our theorem on cones due to the localization of $\hat \chi$.

Consequently, we are left to show $L^2 \to L^6$ bounds on the operator determined by $K$.  Due to the sharp cutoff to $|\theta| < \pi$, there is a jump to contend with and the estimates are not a trivial consequence of the standard theory.  It suffices to further assume that $g$ is supported in a small arc of length $\veps$ where $\veps$ is sufficiently small, but otherwise uniform.  In particular, we assume that $\veps<\min(\pi(\rho-1)/2, \pi/4)$.  We then take coordinates such that
\[
\supp(g) \subset \{ (r_1,\theta_1) \in C(\mathbb{S}^1_\rho): r_1 \in (0,4\delta), \theta_1 \in (0,\veps) \}.
\]
Taking coordinates $(r_2,\theta_2)$ such that $\theta_2 \in (-\pi\rho,\pi\rho]$, we now have that
\begin{multline*}
\left\| \mathbf{1}_{(-\pi\rho,\pi]}(\theta_2)\int K(r_1,r_2;\theta_2-\theta_1) g(r_1,\theta_1)r_1dr_1d\theta_1\right\|_{L^6(r_2 dr_2 d\theta_2 )}\\
\lesssim \lambda^{-\frac 13} \|g\|_{L^2(r_1dr_1d\theta_1)}.
\end{multline*}
Indeed, given our assumptions on $\supp(g)$, $K$ vanishes for $\theta_2 < -\pi$.  Hence the characteristic function $\mathbf{1}_{(-\pi\rho,\pi]}(\theta_2)$ ensures that the integral operator identifies with the operator determined by \eqref{convokernel} on $\RR^2$, at which point the bound follows from the standard theory of Carleson-Sj\"olin oscillatory integral operators referenced above.

We are left to show that
\begin{multline*}
\left\| \mathbf{1}_{(\pi,\pi+\veps)}(\theta_2)\int K(r_1,r_2;\theta_2-\theta_1) g(r_1,\theta_1)r_1dr_1d\theta_1\right\|_{L^6(r_2 dr_2 d\theta_2 )}
\\
\lesssim \lambda^{-\frac 13} \|g\|_{L^2(r_1dr_1d\theta_1)}.
\end{multline*}
Let $\eta \in C_c^\infty(\RR)$ be such that $\supp(\eta) \subset(\pi-2\veps,\pi+2\veps)$ and $\eta(\theta)=1$ for $\theta \in [\pi-\veps,\pi+\veps]$.  Given the support hypothesis on the data $g$, we may replace $K(r_1,r_2;\theta_2-\theta_1)$ by $K(r_1,r_2;\theta_2-\theta_1) \eta(\theta_2-\theta_1)$.
Moreover, by applying the inequalities of Minkowski and H\"older in the $r_1$ variable, it suffices to show that with $r_1\in (0,4\delta)$ fixed,
\begin{equation}\label{restrictedgeom}
\left\| \int K(r_1,r_2;\theta_2-\theta_1)\eta(\theta_2-\theta_1)f(\theta_1)d\theta_1\right\|_{L^6(r_2 dr_2 d\theta_2 )}\lesssim \lambda^{-\frac 13}r_1^{-\frac 12} \|f\|_{L^2(d\theta_1)}.
\end{equation}

To show \eqref{restrictedgeom}, we let $T^{\lambda,r_1}$ denote the oscillatory integral operator defined by the left hand side of the inequality.  Consider the mapping defined by
\[
(T^{\lambda,r_1}_{r_2}f)(\theta_2) = \int K(r_1,r_2;\theta_2-\theta_1)\eta(\theta_2-\theta_1)f(\theta_1)d\theta_1
\]
so that for a function $F \in L^{\frac 65}(\tilde{r}_2d\tilde{r}_2d\theta_1)$, we have
\[
\left(T^{\lambda,r_1}\circ(T^{\lambda,r_1})^*(F)\right)(r_2,\theta_2) = \int_{-\infty}^\infty \left(T^{\lambda,r_1}_{r_2}\circ(T^{\lambda,r_1}_{\tilde{r}_2})^*(F(\tilde{r}_2,\cdot))\right)(\theta_2) \tilde{r}_2d\tilde{r}_2.
\]
A standard duality argument implies that \eqref{restrictedgeom} will follow from
\[
\left\| \left(T^{\lambda,r_1}\circ(T^{\lambda,r_1})^*(F)\right) \right\|_{L^6(r_2dr_2d\theta_2)} \lesssim \lambda^{-\frac 23} r_1^{-1} \|F\|_{L^{\frac 65}(\tilde{r}_2d\tilde{r}_2d\theta_1)}
\]
which in turn follows from interpolating the bounds
\begin{align}
\left\|\left(T^{\lambda,r_1}_{r_2}\circ(T^{\lambda,r_1}_{\tilde{r}_2})^*(f)\right) \right\|_{L^\infty(d\theta_2)} &\lesssim (\lambda|r_2-\tilde{r}_2|)^{-\frac 12} r_1^{-1} \|f\|_{L^1(d\theta_1)}
\label{l1linfbound}
\\
\left\|\left(T^{\lambda,r_1}_{r_2}\circ(T^{\lambda,r_1}_{\tilde{r}_2})^*(f)\right) \right\|_{L^2(d\theta_2)} &\lesssim (\lambda r_1r_2)^{-\frac 12}(\lambda r_1\tilde{r}_2)^{-\frac 12}\|f\|_{L^2(d\theta_1)}\label{l2l2bound}
\end{align}
followed by the Hardy-Littlewood-Sobolev fractional integration inequality in $r_2$.

To see \eqref{l1linfbound}, we first observe that the Schwartz kernel of $T^{\lambda,r_1}_{r_2}\circ(T^{\lambda,r_1}_{\tilde{r}_2})^*$ is
\begin{equation}\label{ttstarkernel}
\int \mathbf{1}_{[\pi-2\veps,\pi]}(\theta_1-\theta)\mathbf{1}_{[\pi-2\veps,\pi]}(\theta_2-\theta)e^{i\lambda \Psi_{r_1}(r_2,\tilde{r}_2,\theta_1,\theta_2,\theta)} A_{\lambda,r_1}(r_2,\tilde{r}_2,\theta_1,\theta_2,\theta)\,d\theta
\end{equation}
where
\[
\Psi_{r_1}(r_2,\tilde{r}_2,\theta_1,\theta_2,\theta) =G(r_1,r_2,\theta_2-\theta)- G(r_1,\tilde{r}_2,\theta_1-\theta)
\]
and
\[
A_{\lambda,r_1}(r_2,\tilde{r}_2,\theta_1,\theta_2,\theta) = a_\lambda(G(r_1,r_2,\theta_2-\theta))\overline{a_\lambda}(G(r_1,\tilde{r}_2,\theta_1-\theta))
\eta(\theta_2-\theta)\eta(\theta_1-\theta).
\]
Note that given the compact support of $\eta$, including the left endpoint $\pi-2\veps$ in the characteristic functions in \eqref{ttstarkernel} is redundant, but is kept for emphasis.  The limits of integration in the integral can thus be taken as $\max(\theta_1-\pi, \theta_2-\pi)$ and $\min(\theta_1-\pi+2\veps,\theta_2-\pi+2\veps)$.  The bound \eqref{l1linfbound} will follow by applying  standard oscillatory integral estimates to \eqref{ttstarkernel}.  We may assume that $r_1^2|r_2-\tilde{r_2}|\geq \lambda^{-1}$ below as the other case is trivial.  The crucial lower bound is thus
\begin{equation}\label{lowerbound2nd}
\left|\prtl_{\theta}\Psi_{r_1}(r_2,\tilde{r}_2,\theta_1,\theta_2,\theta) \right| + \left|\prtl_{\theta}^2\Psi_{r_1}(r_2,\tilde{r}_2,\theta_1,\theta_2,\theta) \right| \gtrsim |r_2-\tilde{r}_2|r_1^2.
\end{equation}
Once this is established, either the phase lacks critical points or we can appeal to the stationary phase estimates in \cite[\S VIII.1.2]{steinharmonic}. In either case, we have that \eqref{ttstarkernel} is $O( (\lambda|r_2-\tilde{r}_2|)^{-\frac 12} r_1^{-1} )$.  Note that this version of stationary phase is uniform regardless of the location of any critical points in the domain of integration.

To see the lower bound \eqref{lowerbound2nd}, first observe that by taking $\veps$ sufficiently small, we may assume that if $A_{\lambda,r_1}(r_2,\tilde{r}_2,\theta_1,\theta_2,\theta)\neq 0$, then $(r_2+r_1),(\tilde{r}_2+r_1)  \geq \delta/4$.  Next, by using that
\[
 \frac{1}{G(r_1,r_2,\theta)}= \frac{1}{r_1+r_2}+\frac{r_1r_2(\cos\theta+1)}{(r_1+r_2)^{3}}
\]
we see that
\begin{align}
\prtl_\theta G(r_1,r_2,\theta) &=\frac{r_1r_2}{r_1+r_2}\sin(\theta) + O((r_1r_2)^2(\theta-\pi)^3)\label{Gfirstderiv}\\
\prtl^2_\theta G(r_1,r_2,\theta) &= \frac{r_1r_2}{r_1+r_2}\cos(\theta) + O((r_1r_2)^2(\theta-\pi)^2).\label{Gsecondderiv}
\end{align}
Hence up to acceptable error, $\prtl_{\theta}^2\Psi_{r_1}(r_2,\tilde{r}_2,\theta_1,\theta_2,\theta)$ is
\[
\frac{r_1^2(r_2-\tilde{r}_2)}{(r_1+r_2)(r_1+\tilde{r}_2)}\cos(\theta-\theta_2) +
\frac{r_1\tilde{r}_2}{r_1+\tilde{r}_2} \left(\cos(\theta-\theta_2)-\cos(\theta-\theta_1)\right).
\]
The first term has the proper lower bound we desire and the estimate will hold provided the second term is lower order.  The second term is $O(r_1\tilde{r}_2\veps|\theta_1-\theta_2|)$, so this shows \eqref{lowerbound2nd} provided
\[ r_1\tilde{r}_2|\theta_1-\theta_2| \leq C\veps^{-1} r_1^2|r_2-\tilde{r}_2| \]
for some constant $C$ which can be taken as absolute as long as $\veps$ is sufficiently small.  We are thus left to see that if
\[r_1\tilde{r}_2|\theta_1-\theta_2| > Cr_1^2|r_2-\tilde{r}_2|,\]
then the first derivative term in \eqref{lowerbound2nd} satisfies the lower bound.  But \eqref{Gfirstderiv} shows that up to acceptable error, $\prtl_{\theta}\Psi_{r_1}(r_2,\tilde{r}_2,\theta_1,\theta_2,\theta)$ is
\[
 \frac{r_1^2(r_2-\tilde{r}_2)}{(r_1+r_2)(r_1+\tilde{r}_2)}\sin(\theta-\theta_2) +
\frac{r_1\tilde{r}_2}{r_1+\tilde{r}_2} \left(\sin(\theta-\theta_2)-\sin(\theta-\theta_1)\right).
\]
This time we may take $\veps$ small so that the absolute value of the second term is bounded below by $|\theta_1-\theta_2|(r_1\tilde{r}_2)/(4\delta)$.  Since the first term is $O(\veps r_1^2|r_2-\tilde{r}_2|)$, the inequality now follows by taking $\veps$ sufficiently small.

The bound \eqref{l2l2bound} will follow from
\[
\|T^{\lambda,r_1}_{r_2}\|_{L^2(d\theta_1)\to L^2(d\theta_2)} \lesssim (\lambda r_1 r_2)^{-\frac 12}.
\]
The assumption on the data $g$ and the supports of $K$, $\eta$ in $\theta_2-\theta_1$ mean that we may treat the $\theta_i$ as variables in $\RR$.  Since $T^{\lambda,r_1}_{r_2}$ is a convolution kernel in $\theta$ it thus suffices to show that the corresponding Fourier multiplier satisfies
\[
\left|\int_{\pi-2\veps}^{\pi} e^{i\zeta + i\lambda G(r_1,r_2,\theta)} a_\lambda\left(G(r_1,r_2,\theta)\right)
\eta(\theta)d\theta\right| \lesssim (\lambda r_1 r_2)^{-\frac 12}.
\]
But this follows from \eqref{Gsecondderiv} and the same version of stationary phase used above, once we recall that we may assume that $r_1+r_2 \geq \delta/4$.

\subsection{The diffractive contribution}
Recall from \eqref{diffwk}, the contribution of the diffractive term is supported where $t>r_1+r_2$ and is a sum of terms of the form
\[
-\frac{1}{4\pi^2\rho}\int_0^{\cosh^{-1}\left(\frac{t^2-r_1^2-r_2^2}{2r_1r_2}\right)}
\left( t^2-r_1^2-r_2^2-2r_1r_2\cosh(s)\right)^{-\frac 12}\frac{\sin\varphi}{\cosh(\frac s\rho)-\cos\varphi}\,ds
\]
where $\varphi = (\pi\pm(\theta_1-\theta_2))/\rho$.  Recalling the left hand side of \eqref{evenintegral} and that $\supp(\widehat{\chi})\subset(\delta,2\delta)$, this has a nontrivial contribution to the Schwartz kernel only when $r_1+r_2 <2\delta$.  In this case, reasoning as in \eqref{fourierfund} the contribution is
\begin{multline}\label{diffcontrib}
-\frac{\lambda}{4\pi^2\rho}\int_{-\infty}^\infty \int_{r_1+r_2}^\infty \int_0^{\cosh^{-1}\left(\frac{t^2-r_1^2-r_2^2}{2r_1r_2}\right)} \chi(\tau) \left( \sin(t(\lambda+\tau))+ \sin(t(\lambda-\tau))\right)\\
\times\left( t^2-r_1^2-r_2^2-2r_1r_2\cosh(s)\right)^{-\frac 12}\frac{\sin\varphi}{\cosh(\frac s\rho)-\cos\varphi}\,ds
dt d\tau=\\
-\frac{\lambda}{2\pi^2\rho}\int_{-\infty}^\infty \int_{r_1+r_2}^\infty \int_0^{\cosh^{-1}\left(\frac{t^2-r_1^2-r_2^2}{2r_1r_2}\right)} \chi(\tau) \sin(t(\lambda-\tau))\\
\times\left( t^2-r_1^2-r_2^2-2r_1r_2\cosh(s)\right)^{-\frac 12}\frac{\sin\varphi}{\cosh(\frac s\rho)-\cos\varphi}\,ds
dt d\tau
.
\end{multline}
where the second expression follows from the fact that $\tau$ is even. Switching the order of integration in $t,s$ yields
\begin{multline*}
-\frac{\lambda}{2\pi^2\rho}\int_{-\infty }^\infty \chi(\tau) \int_0^\infty\frac{\sin\varphi}{\cosh(\frac s\rho)-\cos\varphi}\times\\
\left(\int_{(r_1^2+r_2^2+2r_1r_2\cosh s)^{\frac 12} }^\infty \frac{\sin(t(\lambda-\tau))}{\left( t^2-r_1^2-r_2^2-2r_1r_2\cosh s \right)^{\frac 12}}\,dt\right)
ds d\tau.
\end{multline*}
Neglecting harmless constants, \eqref{besselidentity} shows that this is
\[
\lambda\int_{-\infty }^\infty \chi(\tau) \int_0^\infty\frac{\sgn(\lambda-\tau)\sin\varphi}{\cosh(\frac s\rho)-\cos\varphi}J_0\left((r_1^2+r_2^2+2r_1r_2\cosh s)^{\frac 12}|\lambda-\tau|\right)\,
ds d\tau.
\]
Proceeding as above, we use the smooth, even bump function $\psi$ satisfying $\supp(\psi) \subset (-\frac 12,\frac 12)$ and $\supp(1-\psi) \subset (-\frac 14, \frac 14)^c$.  As before, it suffices to restrict attention to the integral
\begin{equation}\label{diffhankelintegral}
\lambda\int_{-\infty}^\infty \chi(\tau)\psi(\tau/\lambda) \int_0^\infty\frac{\sin\varphi}{\cosh(\frac s\rho)-\cos\varphi}J_0\left((r_1^2+r_2^2+2r_1r_2\cosh s)^{\frac 12}(\lambda-\tau)\right)\,
ds d\tau
\end{equation}
as the error is $O(\lambda^{-N})$ uniformly in $\varphi$ by the same argument as above.  We now follow the same approach as in \eqref{alambdageom} and in particular we use the function $\tilde{\psi}$ from that same discussion. First set
\[
D(r_1,r_2,s) \defeq (r_1^2+r_2^2+2r_1r_2\cosh s)^{\frac 12}.
\]
Similarly to \eqref{alambdageom}, we define
\begin{equation}\label{alambdadiff}
a_\lambda(\zeta) = \int\widehat{\chi}(s)\widehat{\tilde{\psi}}(\lambda(\zeta-s);\lambda \zeta)\lambda ds,
\end{equation}
so that $a_\lambda$ is rapidly decreasing outside a $\lambda^{-1}$ neighborhood of $\supp(\chi)$. Note that in contrast to the previous sections, there is a small difference in the definition of $a_\lambda$ here in that we do not assume that it is compactly supported in $(\delta,2\delta)$. As in the geometric case, from \eqref{diffhankelintegral} it now suffices to consider the real part of
\begin{equation}\label{kerneldef}
K(r_1,r_2,\theta) \defeq \int_0^\infty e^{i\lambda D(r_1,r_2,s)}
\frac{\sin(\frac{\theta}{\rho})}{\cosh(\frac s\rho)-\cos (\frac{\theta}{\rho})}a_\lambda\left(D(r_1,r_2,s)\right) \,ds
\end{equation}
as we may take translations in $\theta$ to remove the term $\pi$ in the definition of $\varphi$.  As before, we remove the factor of $\lambda^{\frac 12}$ in the kernel and instead we will prove the following bound.

\begin{lemma}
\label{lemkerbd}
For $p=6$ and $p=\infty$ and $K(r_1,r_2,\theta_2-\theta_1) $ defined in \eqref{kerneldef}, we obtain the gain
\begin{equation}\label{diffLpL2}
\left\|\iint K(r_1,r_2,\theta_2-\theta_1) g(r_1,\theta_1)r_1dr_1 d\theta_1\right\|_{L^p(r_2d\theta_2dr_2)} \lesssim \lambda^{-\frac 2p} \left\|g\right\|_{L^2(r_1d\theta_1dr_1)}.
\end{equation}
\end{lemma}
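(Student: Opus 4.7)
The plan is to establish the endpoints $p = \infty$ and $p = 6$ separately; intermediate exponents follow by interpolation. For $p = \infty$, Cauchy--Schwarz in $(r_1, \theta_1)$ against the measure $r_1 \, dr_1 \, d\theta_1$ reduces the claim to the uniform bound $\|K(\cdot, r_2, \theta_2 - \cdot)\|_{L^2(r_1 \, dr_1 \, d\theta_1)} \lesssim 1$. I would establish this from the pointwise estimate $|K(r_1, r_2, \theta)| \lesssim 1$, combining a uniform bound $\|a_\lambda\|_\infty \lesssim 1$ (derived as in the geometric subsection by integration by parts in the Fourier representation \eqref{alambdadiff}) with the elementary non-oscillatory estimate
$$\int_0^\infty \frac{|\sin(\theta/\rho)|}{\cosh(s/\rho) - \cos(\theta/\rho)} \, ds \lesssim 1,$$
which is verified by the substitution $v = \sinh(s/(2\rho))$ and holds uniformly in $\theta$. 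Because the effective support of $a_\lambda(D(r_1, r_2, s))$ forces $r_1 + r_2 \leq 2\delta$, the compactness of the $(r_1, \theta_1)$-support then yields the required $L^2$ estimate.

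For $p = 6$, I would mirror the geometric argument leading to \eqref{restrictedgeom}. First, Minkowski's and H\"older's inequalities in $r_1 \in (0, 4\delta)$ reduce the problem to showing, for fixed $r_1$,
$$\|T^{\lambda, r_1} f\|_{L^6(r_2 \, dr_2 \, d\theta_2)} \lesssim \lambda^{-1/3} r_1^{-1/2} \|f\|_{L^2(d\theta_1)},$$
where $T^{\lambda, r_1}_{r_2} f(\theta_2) \defeq \int K(r_1, r_2, \theta_2 - \theta_1) f(\theta_1) \, d\theta_1$. A $TT^*$ argument then reduces this further to a pair of bounds on $T^{\lambda, r_1}_{r_2} \circ (T^{\lambda, r_1}_{\tilde r_2})^*$, an $L^1(d\theta_1) \to L^\infty(d\theta_2)$ bound and an $L^2 \to L^2$ bound, with appropriate dependence on $(r_1, r_2, \tilde r_2)$ to feed the Hardy--Littlewood--Sobolev fractional integration in $r_2$ as in the geometric case. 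The Schwartz kernel of $T^{\lambda, r_1}_{r_2} \circ (T^{\lambda, r_1}_{\tilde r_2})^*$ takes the form
$$H = \iiint e^{i\lambda(D(r_1, r_2, s_1) - D(r_1, \tilde r_2, s_2))} A(s_1, s_2, \theta_1, \theta_2, \theta) \, ds_1 \, ds_2 \, d\theta,$$
with amplitude $A$ built from Carleman factors and $a_\lambda$ terms. Since the phase separates in $(s_1, s_2)$, I would analyze it by two-dimensional boundary stationary phase at the corner critical point $(0, 0)$, where the Hessian $\mathrm{diag}(r_1 r_2/(r_1 + r_2), -r_1 \tilde r_2/(r_1 + \tilde r_2))$ is non-degenerate for $r_1 > 0$, producing the expected $\lambda^{-1}$ gain together with geometric factors of order $\sqrt{(r_1 + r_2)(r_1 + \tilde r_2)/(r_1^2 r_2 \tilde r_2)}$; the $L^2 \to L^2$ bound follows by Plancherel combined with one-dimensional stationary phase applied to the Fourier transform of $K(r_1, r_2, \cdot)$ in $\theta$.

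The principal obstacle is the non-smoothness of the Carleman amplitude at $(s, \theta) = (0, 0)$, which obstructs direct application of stationary phase. I would resolve this by a dyadic decomposition in $|\theta|$: on $\{|\theta| \lesssim \lambda^{-1/2}\}$ the Carleman factor is handled non-oscillatorily via the integrability bound from the $p = \infty$ argument, while on $\{|\theta| \gg \lambda^{-1/2}\}$ the amplitude is effectively smooth on the scale of the phase oscillation and classical stationary phase applies. A further delicate point is extracting the needed decay in $|r_2 - \tilde r_2|$ from the phase factor $e^{i\lambda(r_2 - \tilde r_2)}$ that appears at the critical point, which is what ultimately feeds Hardy--Littlewood--Sobolev; this should follow by nonstationary-phase integration by parts in the Fourier variable dual to $\theta$, and is the main bookkeeping step in completing the final estimate.
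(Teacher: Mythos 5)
Your treatment of the $p=\infty$ case is sound and in the same spirit as the paper's: the paper uses the $L^q(d\theta)$ bound \eqref{thetayoungs} with $q=2$ (integrable in $s$) plus Young/Minkowski, while you use the $L^1$-in-$s$ bound to get $|K|\lesssim 1$ plus Cauchy--Schwarz and the compact support of $a_\lambda(D)$; these are essentially equivalent. The reductions you propose for $p=6$ (Minkowski and H\"older to freeze $r_1$) are also consistent with the paper.

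The $p=6$ argument, however, has a genuine gap. You propose to mirror the geometric $TT^*$ argument, feeding an $L^1(d\theta_1)\to L^\infty(d\theta_2)$ bound with a factor $(\lambda|r_2-\tilde r_2|)^{-1/2}$ into Hardy--Littlewood--Sobolev in $r_2$. But in the diffracted kernel \eqref{kerneldef} the phase $D(r_1,r_2,s)$ is a function of $(r_1,r_2,s)$ alone --- $\theta$ enters only through the singular (Carleman) amplitude. Consequently, in the $TT^*$ kernel you write down, the phase $D(r_1,r_2,s_1)-D(r_1,\tilde r_2,s_2)$ is \emph{constant in $\theta$}, so the $\theta$-integration is purely non-oscillatory and cannot produce any decay in $|r_2-\tilde r_2|$. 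Your suggested fix --- ``nonstationary-phase integration by parts in the Fourier variable dual to $\theta$'' --- does not apply, precisely because the phase carries no $\theta$-dependence to differentiate; the factor $e^{i\lambda(r_2-\tilde r_2)}$ you identify at the corner critical point is a fixed modulation, not a source of decay. This is not a bookkeeping step but the crux: in the geometric case the phase $G(r_1,r_2,\theta_2-\theta)$ couples $\theta$ to the radial variables and stationary phase in $\theta$ delivers the $|r_2-\tilde r_2|^{-1/2}$ gain, whereas the diffracted phase does not have that structure. Your boundary stationary phase in $(s_1,s_2)$ (away from $\theta=0$) produces the geometric Hessian factor but no $|r_2-\tilde r_2|$-gain, and the dyadic decomposition in $|\theta|$ does not repair this.

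The paper's route for $p=6$ is qualitatively different and is designed around exactly this obstruction. After localizing to $r_1,r_2\geq\lambda^{-1}$ (using the $L^q$ bound \eqref{thetayoungs}, which you do not exploit) and splitting into the regimes $r_1+r_2\approx\delta$ and $r_1+r_2\leq\delta/2$, the authors replace $e^{-i\lambda(r_1+r_2)}K$ by the conjugate-Poisson-kernel model $H$ in \eqref{Hdef} up to an $O((\lambda r_1r_2)^{-1/2})$ error (handled by Young in $\theta$ and Schur-type bounds in $r$), then compute $\widehat{H}(r_1,r_2,\xi)$ explicitly via Gaussian integrals, and extract the crucial bound \eqref{fouriermultbd} from a nontrivial cancellation between the even part $\sqrt{\pi}\exp(z^2/4)$ and the large-argument asymptotics of a Kummer function $\Phi(1,\tfrac12;\cdot)$ in the odd part. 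The $L^2\to L^6$ bound is then obtained on the Fourier side by a Littlewood--Paley decomposition in $\xi$ combined with Sobolev embedding and Minkowski in $r_2$. None of this has an analogue in your proposal; in particular, without the $|\xi|^{-1}$ decay of the multiplier for $|\xi|\gg(\lambda r_1r_2)^{1/2}$ there is no mechanism for summing the Littlewood--Paley pieces. You would also still need to address the $r_1+r_2\leq\delta/2$ regime, which the paper dispatches by observing that $a_\lambda(D)$ forces $s\gtrsim(r_1r_2)^{-1/2}$, where $D$ has no critical point and the integral is $O(\lambda^{-1}(r_1r_2)^{-1/2})$.
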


We begin by observing that if $ s>0$, $2 < p \leq \infty$, and $q>1$ satisfies $\frac 1q = \frac 12+ \frac 1p$ then
\begin{equation}\label{thetayoungs}
\left( \int_{\mathbb{S}^1_\rho} \left|\frac{\sin(\frac{\theta}{\rho})}{\cosh(\frac s\rho)-\cos(\frac{\theta}{\rho})}\right|^{q}\,d\theta \right)^{\frac 1q}
\lesssim \begin{cases}
s^{\frac 1q-1} & s <1\\
e^{-\frac{s}{\rho }} & s \geq 1
\end{cases}
\end{equation}
Note that as a consequence for each $s$, the integrand in \eqref{kerneldef} is in $L^q(r_i dr_i d\theta )$ when $i=1,2$ and hence by  Young's inequality this maps $L^2\to L^p$ with operator norm integrable in $s$.  In particular, by Minkowski's integral inequality this already yields \eqref{diffLpL2} when $p=\infty$.  For the remainder of the section, we thus assume that $p=6$.  Moreover, since $s^{-\frac 13}$ presents a locally integrable singularity, this gives the estimate
\begin{multline}\label{smallr2}
\left\|\mathbf{1}_{(0,\frac{1}{\lambda})}(r_2)\iint K(r_1,r_2,\theta_2-\theta_1) g(r_1,\theta_1)r_1\,dr_1 d\theta_1\right\|_{L^6(r_2d\theta_2dr_2)}\\ \lesssim \left( \int_0^{\lambda^{-1}} \left| \int_0^{2\delta}\|g(r_1,\cdot)\|_{L^2(d\theta_1)}r_1dr_1 \right|^6\,r_2dr_2\right)^{\frac 16} \lesssim \lambda^{-\frac 13} \left\|g\right\|_{L^2(r_1d\theta_1dr_1)},
\end{multline}
showing that we may further localize $K$ to $r_2\geq \lambda^{-1}$.  The same argument shows that the same restricted kernel maps $L^{\frac 65}\to L^2$ with an even stronger gain.  Consequently, by duality and symmetry of $K$ in $r_1,r_2$ we may also restrict attention to $r_1 \geq \lambda^{-1}$.

\subsubsection{The case $r_1+r_2 \approx \delta$}  Here we obtain estimates on the contribution of
\[
\mathbf{1}_{(\lambda^{-1},\infty)}(r_1)\mathbf{1}_{(\lambda^{-1},\infty)}(r_2)
\mathbf{1}_{(\frac{\delta}{2},2\delta)}(r_1+r_2)K(r_1,r_2,\theta_2-\theta_1)
\]
to \eqref{diffLpL2}. Here we also note that it suffices to replace $K$ by $e^{-i\lambda(r_1+r_2)}K$ in \eqref{diffLpL2} as these modulations to $g$ and its image under the integral operator do not change their $L^p$ norms.  Define the following integral kernel $H$ which will serve as a sufficiently accurate approximation to  $e^{-i\lambda(r_1+r_2)}K(r_1,r_2,\theta_2-\theta_1)$
\begin{equation}\label{Hdef}
H(r_1,r_2,\theta) \defeq 2\rho(r_1+r_2)^{\frac 12}a_\lambda(r_1 + r_2) \int_0^\infty e^{i\lambda r_1 r_2s^2} \frac{\theta}{s^2+\theta^2}\,ds.
\end{equation}
Strictly speaking, we should be multiplying $H$ by characteristic functions which localize us to $r_1, r_2 \geq \lambda^{-1}$ and $r_1+r_2 \in (\delta/2,2\delta)$ but we suppress these for both $H$ and $K$ to avoid cluttering the notation.

\begin{lemma} Suppose $r_1, r_2 \geq \lambda^{-1}$ and $r_1+r_2 \in (\delta/2,2\delta)$ as above.  Then the difference
\[
\tilde{K}(r_1,r_2,\theta) =e^{-i\lambda(r_1+r_2)} K(r_1,r_2,\theta) -H(r_1,r_2,\theta)
\]
satisfies
\begin{equation}\label{ktildebound}
|\tilde{K}(r_1,r_2,\theta) | \lesssim (\lambda r_1 r_2)^{-\frac 12}.
\end{equation}
\end{lemma}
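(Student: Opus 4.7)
The plan is to split the $s$-integration in the definition of $K$ at $s = 1$ and estimate the near-field ($s \leq 1$) and far-field ($s \geq 1$) contributions separately. The far-field portions of $K$ and $H$ will each be controlled individually by $(\lambda r_1 r_2)^{-1/2}$, while in the near-field the difference must arise through a genuine cancellation obtained by Taylor expansion.

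For the far-field, I will bound $|K_{\text{far}}|$ and $|H_{\text{far}}|$ separately and conclude by the triangle inequality. For $K_{\text{far}}$, the amplitude $\sin(\theta/\rho)/(\cosh(s/\rho) - \cos(\theta/\rho))$ and its $s$-derivatives decay like $e^{-s/\rho}$, while $\partial_s D = r_1 r_2 \sinh(s)/D \gtrsim \sqrt{r_1 r_2}\,e^{s/2}$ on $\{s \geq 1\}$ (since $D \asymp \sqrt{r_1 r_2}\,e^{s/2}$ there). A single integration by parts against $e^{i\lambda D}$ introduces a factor $1/(\lambda \partial_s D) \lesssim (\lambda r_1 r_2)^{-1/2} e^{-s/2}$, and the integrated tail is of the claimed size. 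For $H_{\text{far}}$, integration by parts against $e^{i\lambda r_1 r_2 s^2}$ produces a factor $1/(\lambda r_1 r_2 s) \leq (\lambda r_1 r_2)^{-1}$ on $\{s \geq 1\}$, a stronger bound than needed.

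For the near-field, I will Taylor-expand about $s = 0$:
\[
D = (r_1+r_2) + \tfrac{r_1 r_2 s^2}{2(r_1+r_2)} + O(r_1 r_2 s^4), \qquad a_\lambda(D) = a_\lambda(r_1+r_2) + O\bigl(r_1 r_2 s^2/(r_1+r_2)\bigr),
\]
\[
\tfrac{\sin(\theta/\rho)}{\cosh(s/\rho) - \cos(\theta/\rho)} = \tfrac{2\rho \theta}{s^2+\theta^2} + R(s,\theta),
\]
with $R$ smooth and bounded. Inserting these into $K_{\text{near}}$, performing the change of variables $s = \sigma\sqrt{2(r_1+r_2)}$ to align the $K$-phase with the $H$-phase $\lambda r_1 r_2 \sigma^2$, and collecting leading-order contributions produces $H$ up to two types of error: Taylor-remainder contributions from the expansions above, and a mismatch from the rescaled denominator $2(r_1+r_2)\sigma^2+\theta^2$ versus $\sigma^2+\theta^2$. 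Both error types are bounded by $O((\lambda r_1 r_2)^{-1/2})$ using standard stationary-phase estimates and absolute bounds on the relevant oscillatory integrals.

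The principal obstacle is that neither $K$ nor $H$ separately satisfies the pointwise bound $(\lambda r_1 r_2)^{-1/2}$, since both behave like $\theta^{-1}$ as $\theta \to 0$; the lemma is fundamentally a cancellation statement. The cancellation is achieved by matching the leading singular amplitude $2\rho\theta/(s^2+\theta^2)$ in $K$'s angular factor with that of $H$, so that after subtraction only smooth, non-singular remainders survive. Estimating these remainders, together with the denominator mismatch arising from the change of variables, is the technical heart of the argument, but once the matching is arranged each residual term reduces to a standard Fresnel-type oscillatory integral of the allowed size.
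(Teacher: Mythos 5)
Your proposal follows the same skeleton as the paper's proof: truncate the $s$-integral to $[0,1]$, Taylor-expand the conic amplitude $\frac{\sin(\theta/\rho)}{\cosh(s/\rho)-\cos(\theta/\rho)}$ and the phase $D(s)$ near $s=0$, change variables to align the phase with the Fresnel phase $\lambda r_1 r_2 s^2$ in $H$, and control the residual by stationary phase. The one structural difference is the change of variables: the paper sets $r_1 r_2 \tilde s^2 = D(s)-(r_1+r_2)$, which makes the phase \emph{exactly} quadratic and pushes all the error into the Jacobian remainder $\tfrac{ds}{d\tilde s}-(r_1+r_2)^{1/2}=O(r_1 r_2 \tilde s^2)$; your linear substitution $s=\sigma\sqrt{2(r_1+r_2)}$ only matches the phase to leading order, so you acquire an additional \emph{phase} error from the $O(r_1 r_2 s^4/(r_1+r_2))$ tail of the Taylor expansion of $D$ that you never estimate. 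The paper's nonlinear choice is precisely what makes that term disappear.

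Two more substantive issues. First, the remark that ``neither $K$ nor $H$ separately satisfies the pointwise bound\dots since both behave like $\theta^{-1}$'' is false: $\int_0^\infty \theta/(s^2+\theta^2)\,ds = \pi/2$, so $H$ (and likewise $K$) is bounded as $\theta\to 0$. The lemma is delicate not because of a $\theta$-singularity but because both kernels are $O(1)$ while their difference must be $O((\lambda r_1 r_2)^{-1/2})\ll 1$. Second, and more seriously, the ``denominator mismatch'' you flag --- $\frac{\theta}{2(r_1+r_2)\sigma^2+\theta^2}$ versus $\frac{\theta}{\sigma^2+\theta^2}$ --- is not dispatched by ``standard stationary-phase estimates.'' Write $a=2(r_1+r_2)$; then $\frac{\theta}{a\sigma^2+\theta^2}-\frac{\theta}{\sigma^2+\theta^2}=\theta(1-a)\frac{\sigma^2}{(a\sigma^2+\theta^2)(\sigma^2+\theta^2)}$ has $C^1$-norm of size $\theta^{-2}$, and after the scaling $\sigma=\theta v$ the Fresnel integral of this difference depends only on $\nu:=\lambda r_1 r_2\theta^2$ and tends to the nonzero constant $\frac{\pi}{2}(a^{-1/2}-1)$ as $\nu\to 0$. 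Thus for $|\theta|\lesssim(\lambda r_1 r_2)^{-1/2}$ your mismatch term is $O(1)$, not $O((\lambda r_1 r_2)^{-1/2})$. You correctly identify this as the technical heart, but simply asserting it is controlled is the gap in your argument; the claim needs a genuine proof rather than an appeal to ``standard'' estimates.
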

Given the lemma, by applying Young's inequality in $\theta$ along with the inequalities of Minkowski and H\"older, we have
\begin{multline}\label{spcontribution}
\left\|\iint \tilde{K}(r_1,r_2,\theta_2-\theta_1) g(r_1,\theta_1)r_1dr_1 d\theta_1\right\|_{L^6(r_2d\theta_2dr_2)}\\
\lesssim \lambda^{-\frac12} \left(\int_{\lambda^{-1}}^1 \left|\int_{\lambda^{-1}}^1 \|g(r_1,\cdot)\|_{L^2(d\theta_1)}r_1^{\frac 12}dr_1\right|^6r_2^{-2}dr_2 \right)^{\frac 16} \lesssim \lambda^{-\frac 13}  \|g\|_{L^2(r_1dr_1d\theta_1)}.
\end{multline}
Hence it suffices to replace $e^{-i\lambda(r_1+r_2)}K$ by $H$ below.% in \eqref{diffL2L6bound}.

The lemma makes use of the stationary phase estimates in \cite[\S VIII.1.2]{steinharmonic} or \cite[\S2.9]{Erdelyi} which imply if $a\in C^{1}([0,1])$ and $\phi(s)$ has a single nondegenerate critical point at $s=0$, then
\begin{equation}\label{statphase}
\int_0^1 e^{i\mu\phi(s)} a(s)\,ds = O(\mu^{-\frac 12}).
\end{equation}

\begin{proof}
We will see that $\tilde{K}$ is a sum of terms, each of which is $O((\lambda r_1 r_2)^{-\frac 12} )$. Note that while the domain of integration in \eqref{kerneldef} is over $[0,\infty)$, we can include the contribution of the integral over $[1,\infty)$ in $\tilde{K}$ provided $\delta$ is sufficiently small as the phase function over this interval lacks any critical points.

We first observe that the difference between the integral defining $K$, now restricted to $s\in [0,1]$, and the integral
\begin{equation}\label{firstapprox}
\int_0^1 e^{i\lambda(r_1^2+r_2^2+2r_1r_2\cosh s)^{\frac 12}-i\lambda(r_1+r_2)}
\frac{2\rho\theta}{s^2+\theta^2} a_\lambda(r_1 + r_2)\,ds
\end{equation}
can be included in $\tilde{K}$.  Observe that for $s \in [0,1]$
\[
\left|\prtl_s^j\left(\frac{\sin(\frac{\theta}{\rho})}{\cosh(\frac s\rho) -\cos(\frac{\theta}{\rho}) } - \frac{2\rho\theta}{s^2+\theta^2} \right)\right| \lesssim \theta^{1-j}, \qquad j=0,1.
\]
Since \eqref{statphase} only requires that the amplitude is $C^1$, replacing $\frac{\sin(\frac{\theta}{\rho})}{\cosh(\frac s\rho) -\cos(\frac{\theta}{\rho}) }$ by this difference in \eqref{kerneldef} yields a term which is $O((\lambda r_1 r_2)^{-\frac 12} )$ and hence this can indeed be included in $\tilde{K}$.  Moreover,  a Taylor expansion shows that
\begin{equation}\label{phaseexp}
\left( r_1^2+r_2^2 + 2r_1r_2\cosh s \right)^{\frac 12}-(r_1+r_2) = \frac{r_1r_2 s^2}{r_1+r_2}\left(1+ O(r_1r_2s^2)\right).
\end{equation}
Hence
\[
\left|\prtl_s^j\left( a_\lambda\left((r_1^2+r_2^2+2r_1r_2\cosh s)^{\frac 12}\right)-a_\lambda(r_1 + r_2) \right)\right| \lesssim s^{2-j},\qquad j=0,1,
\]
and applying \eqref{statphase} a second time completes the proof of the claim that error introduced by replacing the integral in $K$ by \eqref{firstapprox} is acceptable.

We now make a change of variables in \eqref{firstapprox}, defining $\tilde{s}$ as a function of $s$ by
\[
r_1r_2 \tilde{s}^2 = \left( r_1^2+r_2^2 + 2r_1r_2\cosh s \right)^{\frac 12}-(r_1+r_2).
\]
Therefore by \eqref{phaseexp}, we have
$
\frac{d\tilde{s}}{ds} = \frac{1}{(r_1+r_2)^{\frac 12}}+O(r_1r_2s^2)
$
and hence since $s=O(\tilde{s})$,
\[
\frac{ds}{d\tilde{s}} = (r_1+r_2)^{\frac 12}+O(r_1r_2\tilde{s}^2)
\]
Applying stationary phase as before, the contribution of the second term on the right here can be included into $\tilde{K}$.  If the domain of integration of the integral in \eqref{Hdef} were over $[0,1]$, this would conclude the proof.  Since this is not the case, we simply observe that
\[
\int_1^\infty e^{i\lambda r_1 r_2s^2} \frac{\theta}{s^2+\theta^2}\,ds
=
O\left((\lambda r_1r_2)^{-1}\right)
\]
as the phase function here lacks critical points.
\end{proof}

Returning to $H$ in \eqref{Hdef}, we redefine $\psi$ to be an even bump function supported in a small interval about the origin of size much less than $\rho$.  Define $\widetilde{H}$ as the kernel obtained by multiplying the expression on the left hand side of \eqref{Hdef} by $\psi(\theta)$. Applying stationary phase, we have that
\[
2\rho(1-\psi)(\theta)(r_1+r_2)^{\frac 12}a_\lambda(r_1 + r_2)
\int_0^\infty e^{i\lambda r_1 r_2s^2} \frac{\theta}{s^2+\theta^2}\,ds = O\left((\lambda r_1 r_2)^{-\frac 12}\right).
\]
Given \eqref{spcontribution}, it now suffices to show \eqref{diffLpL2} with $K$ replaced by $\widetilde{H}$.

Next, we recall that the convolution kernel defined for $\theta \in \RR$ by
\[
Q_s(\theta) \defeq \pi \frac{\theta}{s^2+\theta^2}
\]
is known as the \emph{conjugate Poisson kernel}, see for example \cite[p.265]{Grafakos}.  Its action is equivalent to the Fourier multiplier with symbol
\[
\widehat{Q_s}(\xi) = -i\sgn(\xi)e^{-s|\xi|}
\]
and hence if we let $H(r_1,r_2,\theta)$ be the expression in \eqref{Hdef} but with $\theta \in \RR$ (as opposed to $\theta \in \mathbb{S}^1_\rho$), we have that its partial Fourier transform in $\theta$ satisfies
\begin{align}
\widehat{H}(r_1,r_2,\xi) &= \frac{2\rho}{i\pi}\sgn(\xi)(r_1+r_2)^{\frac 12}a_\lambda(r_1 + r_2) \int_0^\infty e^{i\lambda r_1 r_2 s^2}
e^{-s|\xi|}\,ds \notag\\
&= \frac{2\rho}{i\pi}(r_1+r_2)^{\frac 12}\sgn(\xi)a_\lambda(r_1 + r_2)
\left(\sum_{k=0}^\infty \frac{(-1)^k|\xi|^k}{k!} \int_0^\infty e^{i\lambda r_1 r_2 s^2} s^k\,ds\right). \label{Hhat}
\end{align}

\begin{lemma}\label{lem:fouriermultbd}
The multipliers $\widehat{H}(r_1,r_2,\xi)$, $\widehat{\widetilde{H}}(r_1,r_2,\xi)$ satisfy the bounds
\begin{equation}\label{fouriermultbd}
|\widehat{\widetilde{H}}(r_1,r_2,\xi)|, |\widehat{H}(r_1,r_2,\xi) | \lesssim
\begin{cases} (\lambda r_1 r_2)^{-\frac 12} & |\xi| \lesssim ( \lambda r_1 r_2)^{\frac{1}{2}} \\
\frac{1}{|\xi|} & |\xi| \gg ( \lambda r_1 r_2 )^{\frac{1}{2}}
\end{cases}.
\end{equation}
\end{lemma}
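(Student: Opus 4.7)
The proof reduces to analyzing the single oscillatory integral
\[
I(\mu,|\xi|) \defeq \int_0^\infty e^{i\mu s^2}\,e^{-s|\xi|}\,ds, \qquad \mu = \lambda r_1 r_2,
\]
since the prefactor $(r_1+r_2)^{\frac12}\,a_\lambda(r_1+r_2)$ is uniformly bounded on the relevant range $r_1+r_2\in(\delta/2,2\delta)$ and the sum in \eqref{Hhat} is simply the series expansion of $\exp(-s|\xi|)$ inside $I$. First I would establish the two basic bounds on $I$. The trivial bound is
\[
|I(\mu,|\xi|)| \le \int_0^\infty e^{-s|\xi|}\,ds = \frac{1}{|\xi|}.
\]
For the other bound, substitute $s=t/\sqrt{\mu}$ to obtain
\[
I(\mu,|\xi|) = \mu^{-\frac12}\int_0^\infty e^{it^2}\,e^{-t|\xi|/\sqrt{\mu}}\,dt,
\]
and I would then check that the damped Fresnel integral $F(\alpha) \defeq \int_0^\infty e^{it^2}\,e^{-\alpha t}\,dt$ is bounded uniformly in $\alpha \ge 0$: split at $t=1$, estimating the $[0,1]$ piece trivially and integrating by parts on $[1,\infty)$ via $e^{it^2}=(2it)^{-1}\partial_t e^{it^2}$. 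This yields $|I(\mu,|\xi|)| \lesssim \mu^{-\frac12}$. Combining the two bounds and using that in the regime $r_1,r_2\ge\lambda^{-1}$ with $r_1+r_2\in(\delta/2,2\delta)$ we have $\mu = \lambda r_1 r_2 \gtrsim \delta$, I get
\[
|\widehat{H}(r_1,r_2,\xi)| \lesssim \min\bigl((\lambda r_1r_2)^{-\frac12},\ |\xi|^{-1}\bigr),
\]
which is exactly the claimed bound for $\widehat{H}$ in the two regimes.

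For $\widehat{\widetilde H}$, I would use that $\widetilde H = \psi(\theta) H$, so
\[
\widehat{\widetilde H}(r_1,r_2,\xi) = (\widehat{\psi}*\widehat{H})(\xi),
\]
where $\widehat{\psi}$ is Schwartz with $\|\widehat{\psi}\|_{L^1} < \infty$. In the regime $|\xi| \lesssim (\lambda r_1r_2)^{\frac12}$, the desired estimate follows at once from $\|\widehat{H}\|_{L^\infty} \lesssim (\lambda r_1r_2)^{-\frac12}$. In the regime $|\xi| \gg (\lambda r_1r_2)^{\frac12}$, I would split the convolution integral according to $|\eta|\ge|\xi|/2$ and $|\eta|<|\xi|/2$. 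On the first piece, $|\widehat{H}(\eta)|\le 1/|\eta|\le 2/|\xi|$, and integrating $|\widehat{\psi}(\xi-\eta)|$ yields a contribution $\lesssim 1/|\xi|$. On the second piece, $|\xi-\eta|\ge|\xi|/2$ gives $|\widehat{\psi}(\xi-\eta)| \le C_N |\xi|^{-N}$, and the bound $|\widehat{H}(\eta)|\le(\lambda r_1 r_2)^{-\frac12} = O(1)$ integrated over $|\eta|<|\xi|/2$ produces a total $\lesssim |\xi|^{1-N}$, which is $\lesssim 1/|\xi|$ for $N\ge 2$.

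The only nontrivial analytic input is the uniform-in-$\alpha$ boundedness of $F(\alpha)$, which is classical and handled by the one-line stationary phase/integration by parts argument indicated above; all remaining steps are elementary triangle-inequality manipulations and tracking the constraint $\lambda r_1 r_2 \gtrsim 1$ arising from the prior reductions. I anticipate no serious obstacle: the main point is recognizing that the two-regime bound on $\widehat{H}$ is exactly the standard trade-off between the oscillation at $s=0$ (yielding the $\mu^{-1/2}$ gain) and the exponential damping (yielding the $1/|\xi|$ gain), with the crossover occurring precisely at $|\xi|\sim\sqrt{\mu}=(\lambda r_1 r_2)^{1/2}$.
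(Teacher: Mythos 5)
Your argument is correct and proceeds by a genuinely different route than the paper. The paper's proof starts from \eqref{Hhat}, expands $e^{-s|\xi|}$ term by term, evaluates each Gaussian moment $\int_0^\infty e^{i\lambda r_1 r_2 s^2} s^k\,ds$ in closed form, reassembles the resulting power series, and then must recognize the odd part as a Kummer confluent hypergeometric function $\Phi(1,\tfrac12;\cdot)$ and invoke its large-argument asymptotics to exhibit the cancellation of the exponentially large pieces coming from the even and odd parts; this is the content of the bound \eqref{powerseriesbd}. You instead bypass the series entirely and bound the closed-form integral $\int_0^\infty e^{i\mu s^2}e^{-s|\xi|}\,ds$ directly: the trivial $L^1$ estimate gives the $1/|\xi|$ decay, and rescaling together with one integration by parts on $[1,\infty)$ gives the uniform $\mu^{-1/2}$ bound. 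This is more elementary, avoids special-function asymptotics altogether, and also sidesteps the implicit interchange of sum and (non-absolutely-convergent) integral that the power-series manipulation requires. Your handling of $\widehat{\widetilde H}$ via splitting the convolution with $\widehat{\psi}$ is also fine, and you correctly invoke the constraint $\lambda r_1 r_2 \gtrsim 1$ (which holds here since $r_1,r_2\geq\lambda^{-1}$ and $r_1+r_2\in(\delta/2,2\delta)$ force $\max(r_1,r_2)\gtrsim\delta$) to ensure the far-tail contribution is dominated by $|\xi|^{-1}$; the paper does not spell this convolution estimate out, only remarking that the bound on $\widehat{\widetilde H}$ follows since $\widehat{\psi}$ is Schwartz. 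In short: both proofs are correct, yours is the more self-contained and robust of the two.
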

\begin{proof}  First note that since $\widehat{\psi}$ is a Schwartz class function rapidly decreasing on the unit scale, the bound on $\widehat{\widetilde{H}}=\widehat{\psi} * \widehat{H}$ follows from the one on $\widehat{H}$.  We now consider the identity (cf. \cite[p. 54]{Erdelyi})
\[
\int_0^\infty e^{i \lambda r_1 r_2 s^2} s^k\,ds = \frac 12 \Gamma\left(\frac{k+1}{2}\right)e^{\frac{i\pi}{4}(k+1)}(\lambda r_1 r_2 )^{- \frac{k+1}{2}}.
\]
The power series in parentheses in \eqref{Hhat} thus takes the form
\begin{equation}\label{khat}
\frac 12 \sum_{k=0}^\infty \frac{(-1)^k|\xi|^k}{k!}\Gamma\left(\frac{k+1}{2}\right)e^{\frac{i\pi}{4}(k+1)}
(\lambda r_1 r_2 )^{- \frac{k+1}{2}}.
\end{equation}
The power series
\[
F(z) = \sum_{k=0}^\infty \frac{(-1)^k \Gamma\left(\frac{k+1}{2}\right)}{k!}z^k = \sum_{l=0}^\infty \frac{ \Gamma\left(l+\frac 12 \right)}{(2l)!}z^{2l} +\sum_{l=1}^\infty \frac{ \Gamma\left(l \right)}{(2l-1)!}z^{2l-1},
\]
is seen to converge uniformly on compact sets and satisfies
\[
\widehat{H}(r_1,r_2,\xi) =2\rho(r_1+r_2)^{\frac 12}\sgn(\xi)a_\lambda(r_1 + r_2)  ( \lambda r_1 r_2 )^{- \frac{1}{2}} e^{-\frac{i\pi}{4}}
F\left((\lambda r_1 r_2 )^{- \frac{1}{2}}e^{\frac{i\pi}{4}}|\xi|\right).
\]
The desired bound \eqref{fouriermultbd} will then follow from
\begin{equation}\label{powerseriesbd}
|F(z)| \lesssim \begin{cases} 1 & |z| \leq 1\\
\frac{1}{|z|} & |z| \geq  1
\end{cases},
\end{equation}
for $z \in \mathbb{C}$ such that $\arg(z)=\frac \pi4$. The bound for $|z| \leq 1$ is immediate from the uniform convergence noted above.

To analyze the  behavior of $F(z)$ when $|z|$ is large, we split the series into even and odd terms as above.  When $k=2l$ is even,  the duplication formula 
\[ \Gamma(z)\Gamma(z+\frac 12) = 2^{1-2z}\sqrt{\pi}\Gamma(2z)\] 
gives
\[
\Gamma\left(\frac{k+1}{2}\right) = \Gamma\left(l+\frac{1}{2}\right) = \frac{(2l)!\sqrt{\pi}}{4^l l!}.
\]
The contribution of the even terms to $F(z)$ is therefore easy to describe as
\begin{equation}\label{evencontrib}
\sum_{l=0}^\infty \frac{\sqrt{\pi}}{4^l l!}z^{2l}=\sqrt{\pi}\exp(z^2/4).
\end{equation}
When $k=2l-1$ is odd, we use Pochhammer notation $(\frac 12)_{l} = (\frac 12)(\frac 32)\cdots (l-\frac 12)$ to write
$ (2l-1)! = 2^{2l-1}\left(\frac 12\right)_{l}(l-1)!$. Hence
\[
 \frac{\Gamma\left(\frac{k+1}{2}\right)}{k!} = \frac{(l-1)!}{(2l-1)!} = \frac{2}{4^l(\frac 12)_l}=\frac{2 l!}{4^l(\frac 12)_l l!}.
\]
The odd terms thus yield a series that can be expressed in terms of a Kummer function $\Phi(\alpha,\gamma;w) = \sum_{l=0}^\infty \frac{(\alpha)_l}{(\gamma)_l l!}w^l$ (cf. \cite[\S9.9]{Leb}, though note that several other authors denote this as $M(\alpha,\gamma;w)$):
\[
-\frac{2}{z}\sum_{l=1}^\infty \frac{(1)_l}{(\frac 12)_l l!}\left( \frac{z^2}{4}\right)^l = -\frac{2}{z}\left( \Phi\left(1,\frac 12;\frac{z^2}{4}\right)-1\right).
\]
Using the asymptotics of $\Phi(\alpha,\gamma;w)$ for large $|w|$ and $\arg w =\frac{\pi}{2}$ from \cite[(9.12.7)]{Leb}, we have
\[
\Phi\left(1,\frac 12;w\right) = \Gamma\left(\frac 12\right)\left( \frac{e^w w^{\frac 12}}{\Gamma(1)} +  O(w^{-\frac 12}) \right).
\]
Hence since $\Gamma(\frac 12) = \sqrt{\pi}$:
\[
-\frac{2}{z}\left( \Phi\left(1,\frac 12;\frac{z^2}{4}\right)-1\right) = -\sqrt{\pi}  \exp(z^2/4) + \frac 2z + O(z^{-2}).
\]
The bound \eqref{powerseriesbd} now follows from the cancellation between \eqref{evencontrib} and the first term in the asymptotic expansion here.
\end{proof}

\begin{theorem} The operator determined by the integral kernel $\widetilde{H}$ maps $L^2 \to L^6$ with operator norm bounded by $\lambda^{-\frac 13}$:
\begin{equation}\label{diffL2L6bound}
\left\|\iint \widetilde{H}(r_1,r_2,\theta_2-\theta_1) g(r_1,\theta_1)\,d\theta_1r_1dr_1 \right\|_{L^6(d\theta_2r_2dr_2)} \lesssim \lambda^{-\frac 13}\|g\|_{L^2(d\theta_1r_1dr_1)} .
\end{equation}
\end{theorem}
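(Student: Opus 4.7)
The strategy mirrors the geometric endgame: a $TT^*$ reduction, interpolation between $L^2(\theta) \to L^2(\theta)$ and $L^1(\theta) \to L^\infty(\theta)$ bounds on the associated two-parameter operator, and Hardy-Littlewood-Sobolev in the radial variable. Let $T$ denote the operator in \eqref{diffL2L6bound} and for each $r_2 > 0$ set
\[
T^\lambda_{r_2} g(\theta_2) \defeq \iint \widetilde{H}(r_1, r_2, \theta_2 - \theta_1) g(r_1, \theta_1)\, r_1 dr_1 d\theta_1.
\]
By duality \eqref{diffL2L6bound} is equivalent to $\|TT^* F\|_{L^6(r_2 dr_2 d\theta)} \lesssim \lambda^{-2/3}\|F\|_{L^{6/5}(\tilde r_2 d\tilde r_2 d\theta)}$. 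Since $TT^* F(r_2, \theta_2) = \int (T^\lambda_{r_2}(T^\lambda_{\tilde r_2})^* F(\tilde r_2, \cdot))(\theta_2)\, \tilde r_2 d\tilde r_2$, the task reduces to estimating $T^\lambda_{r_2}(T^\lambda_{\tilde r_2})^*$ as a function of $(r_2, \tilde r_2)$.

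The $L^2 \to L^2$ bound follows at once from Plancherel: this operator is convolution in $\theta$ with Fourier multiplier $M(r_2, \tilde r_2, \xi) = \int \widehat{\widetilde H}(r_1, r_2, \xi)\overline{\widehat{\widetilde H}(r_1, \tilde r_2, \xi)}\, r_1 dr_1$. Applying the uniform bound $|\widehat{\widetilde H}(r_1, r_i, \xi)| \lesssim (\lambda r_1 r_i)^{-1/2}$ of Lemma~\ref{lem:fouriermultbd} and integrating over $r_1 \in [\lambda^{-1}, 2\delta]$ yields
\[
\|T^\lambda_{r_2}(T^\lambda_{\tilde r_2})^*\|_{L^2(\theta) \to L^2(\theta)} \lesssim \lambda^{-1}(r_2 \tilde r_2)^{-1/2}.
\]

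The crux is the companion $L^1 \to L^\infty$ bound, which I aim to establish in the form $\|K^{(r_2, \tilde r_2)}\|_{L^\infty(\theta)} \lesssim \lambda^{-1/2}|r_2 - \tilde r_2|^{-1/2}$ for the convolution kernel $K^{(r_2, \tilde r_2)}$ of $T^\lambda_{r_2}(T^\lambda_{\tilde r_2})^*$. Using the Fresnel representation
\[
\widehat{H}(r_1, r_i, \xi) = C \sgn(\xi)(r_1+r_i)^{1/2}a_\lambda(r_1+r_i)\int_0^\infty e^{i\lambda r_1 r_i s^2 - s|\xi|}\, ds
\]
from the proof of Lemma~\ref{lem:fouriermultbd}, the Fourier transform $\widehat{K^{(r_2, \tilde r_2)}}(\xi)$ becomes a double integral over $(s, t) \in \mathbb{R}_+^2$ with oscillatory phase $\lambda r_1(r_2 s^2 - \tilde r_2 t^2)$ and damping $e^{-(s+t)|\xi|}$. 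Inverting in $\xi$ produces the Poisson kernel $\tfrac{s+t}{\pi((s+t)^2+\theta^2)}$, and the remaining $(s,t)$-integral is controlled by a two-variable stationary phase: the phase is nondegenerate precisely when $r_2 \neq \tilde r_2$, and integrating out $r_1$ against the smooth amplitude $r_1(r_1+r_2)^{1/2}(r_1+\tilde r_2)^{1/2}|a_\lambda|^2$ delivers the claimed $|r_2 - \tilde r_2|^{-1/2}$ decay.

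Riesz-Thorin interpolation then gives $\|T^\lambda_{r_2}(T^\lambda_{\tilde r_2})^*\|_{L^{6/5}(\theta) \to L^6(\theta)} \lesssim \lambda^{-2/3}(r_2 \tilde r_2)^{-1/6}|r_2 - \tilde r_2|^{-1/3}$. Invoking Minkowski in $\theta_2$, absorbing the weights $r_2^{-1/6}$ and $\tilde r_2^{-1/6}$ into the measures $r_2 dr_2$ and $\tilde r_2 d\tilde r_2$ via the substitution $G(\tilde r_2) = \tilde r_2^{5/6}\|F(\tilde r_2, \cdot)\|_{L^{6/5}(\theta)}$, and then applying the one-dimensional HLS fractional integration inequality $\||\cdot|^{-1/3} * G\|_{L^6(dr_2)} \lesssim \|G\|_{L^{6/5}(d\tilde r_2)}$ closes the argument. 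The principal obstacle is the $L^1 \to L^\infty$ bound: in the geometric case this came from a single stationary phase estimate on an oscillatory integral with a manifestly nondegenerate phase, whereas here the diffractive kernel is a compound object built from nested Fresnel and Poisson contributions, so the $|r_2 - \tilde r_2|^{-1/2}$ decay must be extracted through a more delicate two-variable stationary phase that degenerates as $r_2 \to \tilde r_2$.
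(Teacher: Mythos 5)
Your overall architecture—$TT^*$ reduction, interpolating an $L^2(\theta)\to L^2(\theta)$ bound against an $L^1(\theta)\to L^\infty(\theta)$ bound, then Hardy--Littlewood--Sobolev in the radial variable—is a genuinely different route from the paper, which never forms $TT^*$ for the diffractive piece. Instead the paper fixes $r_1$ by Minkowski and H\"older, decomposes $\widetilde{H}$ dyadically in the Fourier variable $\xi$ via a Littlewood--Paley partition $\{\beta_\ell\}$, applies Young's/Sobolev for each block using the two-regime multiplier bound of Lemma~\ref{lem:fouriermultbd}, and recombines with the square function estimate, summing $\ell^2$ over the dyadic scales before integrating in $r_2$. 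The distinction matters, and this is where your proposal has a genuine gap.

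The load-bearing step in your scheme is the $L^1\to L^\infty$ bound $\|K^{(r_2,\tilde r_2)}\|_{L^\infty(\theta)}\lesssim \lambda^{-1/2}|r_2-\tilde r_2|^{-1/2}$. You do not prove it, and the rationale you give is incorrect: the Hessian in $(s,t)$ of the phase $\lambda r_1(r_2 s^2-\tilde r_2 t^2)$ is $\lambda r_1\,\mathrm{diag}(2r_2,-2\tilde r_2)$, with determinant $-4\lambda^2 r_1^2 r_2\tilde r_2\neq 0$ for all $r_2,\tilde r_2>0$, so the phase is nondegenerate \emph{everywhere}; it does not ``degenerate precisely when $r_2\neq\tilde r_2$.'' (The subtleties are elsewhere: the critical point sits at the corner $s=t=0$ of $[0,\infty)^2$, and the Poisson amplitude $\frac{s+t}{(s+t)^2+\theta^2}$ vanishes there for $\theta\neq 0$ and is singular at $\theta=0$.) Worse, the bound you want is in fact false. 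Write $\widehat{K^{(r_2,\tilde r_2)}}(\xi)=\int\widehat{\widetilde H}(r_1,r_2,\xi)\overline{\widehat{\widetilde H}(r_1,\tilde r_2,\xi)}\,r_1 dr_1$ and estimate $\sup_\theta|K^{(r_2,\tilde r_2)}|$ by taking absolute values. Say $r_2\le\tilde r_2$. In the range $(\lambda r_1 r_2)^{1/2}\lesssim|\xi|\lesssim(\lambda r_1\tilde r_2)^{1/2}$, Lemma~\ref{lem:fouriermultbd} gives $|\widehat{\widetilde H}(r_1,r_2,\xi)|\approx|\xi|^{-1}$ and $|\widehat{\widetilde H}(r_1,\tilde r_2,\xi)|\approx(\lambda r_1\tilde r_2)^{-1/2}$, and (from the proof of that lemma) the Fresnel phases $\exp(i\xi^2/4\lambda r_1 r_i)$ cancel at leading order, leaving the non-oscillatory tail $\sim 2/\xi$; so this range contributes
\[
\approx (\lambda r_1\tilde r_2)^{-1/2}\int_{(\lambda r_1 r_2)^{1/2}}^{(\lambda r_1\tilde r_2)^{1/2}}\frac{d\xi}{\xi} \approx (\lambda r_1\tilde r_2)^{-1/2}\log\!\frac{\tilde r_2}{r_2},
\]
which after integrating $r_1\,dr_1$ against the bounded amplitude gives a genuine factor $\log(\tilde r_2/r_2)$ on top of $\lambda^{-1/2}\tilde r_2^{-1/2}$. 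Since $|r_2-\tilde r_2|\le\max(r_2,\tilde r_2)$, this is \emph{not} dominated by $\lambda^{-1/2}|r_2-\tilde r_2|^{-1/2}$ when $r_2\ll\tilde r_2$ (e.g.\ $r_2\sim\lambda^{-1}$, $\tilde r_2\sim\delta$); your claimed bound fails by a $\log\lambda$ factor. This logarithmic interaction between the two dyadic scales $(\lambda r_1 r_2)^{1/2}$ and $(\lambda r_1\tilde r_2)^{1/2}$ is exactly the borderline loss that the paper's Littlewood--Paley decomposition is built to absorb ($\ell^2$-summing over $\xi$-blocks rather than integrating a pointwise multiplier bound), and it is why the authors did not follow the $TT^*$ template from the geometric section here. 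To make your route work you would need to either prove a log-corrected $L^1\to L^\infty$ bound and then show the resulting kernel is still an $L^{6/5}(\tilde r_2 d\tilde r_2)\to L^6(r_2 dr_2)$ bounded operator (the kernel is then no longer of convolution type, so plain HLS does not apply), or decompose dyadically in $\xi$ before running $TT^*$—at which point you are essentially reproducing the paper's argument.
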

\begin{proof}

By Minkowski's inequality, \eqref{diffL2L6bound} is reduced to
\begin{equation*}
\int \left\| \widetilde{H}(r_1,r_2,\cdot)* g(r_1,\cdot)\right\|_{L^6(d\theta_2r_2dr_2)} r_1 dr_1 \lesssim \lambda^{-\frac 13}\|g\|_{L^2(d\theta_1r_1dr_1)} .
\end{equation*}
In particular, if we can show that for $f \in L^2(d\theta_1)$ and $r_1 \in (0,\delta)$ fixed, we have
\begin{equation}\label{restrictedr1bd}
 \left\| \widetilde{H}(r_1,r_2,\cdot)* f\right\|_{L^6(d\theta_2r_2dr_2)} \lesssim \lambda^{-\frac 13}r_1^{-\frac 13}  \|f \|_{L^2(d\theta_1)},
\end{equation}
then by H\"older's inequality, \eqref{diffL2L6bound} will follow.

Now let $\{\beta_\ell\}_{\ell=0}^\infty$ be a sequence of smooth Littlewood-Paley cutoffs satisfying for $\xi \in \RR$
\[
\sum_{\ell=0}^\infty \beta_\ell (\xi) = 1 \text{ and } \beta_\ell (\xi) = \beta(2^{1-\ell}\xi) \text{ for } \ell \geq 1
\]
with $\supp(\beta) \subset \{|\xi| \in (\frac 12,2) \}$ and $\supp(\beta_0) \subset (-2,2)$. Now define $\widetilde{H}_\ell$ by
\[
\widehat{\widetilde{H}}_\ell (r_1,r_2,\xi) \defeq \beta_\ell (\xi) \widehat{\widetilde{H}}(r_1,r_2,\xi).
\]
By Sobolev embedding/Young's inequality we have that
\begin{equation}\label{sobembed}
\|H_\ell (r_1,r_2,\cdot)*f\|_{L^6(d\theta_2)}  \lesssim
\begin{cases}
(\lambda r_1 r_2)^{-\frac 12}2^{\frac{\ell}{3}}\|\beta_\ell f\|_{L^2(d\theta_1)}  & 2^\ell \leq (\lambda r_1 r_2)^{\frac 12}\\
2^{-\frac {2 \ell}3}\|\beta_\ell f\|_{L^2(d\theta_1)}  & 2^\ell > (\lambda r_1 r_2)^{\frac 12}
\end{cases}.
\end{equation}

The classical Littlewood-Paley square function bound and Minkowski's inequality imply that the left hand side of \eqref{restrictedr1bd} is dominated by
\begin{equation*}
\left( \int \left( \sum_{\ell=0}^\infty \| H_\ell (r_1,r_2,\cdot)*f\|_{L^6(d\theta_2)}^2\right)^3r_2dr_2\right)^{\frac 16} .
\end{equation*}
Applying \eqref{sobembed}, this in turn is bounded by
\begin{multline*}
(\lambda r_1)^{-\frac 12} \left( \int_{\lambda^{-1}}^1 \left(\sum_{2^\ell \leq (\lambda r_1r_2)^{\frac 12}} 2^{\frac{2 \ell }3}
\|\beta_\ell f \|_{L^2(d\theta_1)}^2\right)^3\,r_2^{-2}dr_2\right)^{\frac 13\cdot \frac 12}
\\
+
\left( \int_{\lambda^{-1}}^1 \left(\sum_{2^\ell > (\lambda r_1r_2)^{\frac 12}} 2^{-\frac{4\ell}3}
\|\beta_\ell f \|_{L^2(d\theta_1)}^2\right)^3\,r_2dr_2\right)^{\frac 13\cdot \frac 12} .
\end{multline*}
To bound the first expression here we use Minkowski's inequality to get that
\begin{multline*}
(\lambda r_1)^{-1} \left( \int \left(\sum_{2^\ell \leq (\lambda r_1r_2)^{\frac 12}} 2^{\frac{2 \ell}3}
\|\beta_\ell f\|_{L^2(d\theta_1)}^2r_2^{-\frac 23}\right)^3\,dr_2\right)^{\frac 13}
\\
\lesssim
(\lambda r_1)^{-1} \sum_{\ell=0 }^\infty 2^{\frac{2 \ell }3}\|\beta_\ell f\|_{L^2(d\theta_1)}^2\left( \int_{2^{2 \ell}(\lambda r_1)^{-1} \leq r_2} r_2^{-2}\,dr_2\right)^{\frac 13} \\
\lesssim (\lambda r_1)^{-\frac 23}\sum_{\ell=0 }^\infty  \|\beta_\ell f\|_{L^2(d\theta_1)}^2 \approx
(\lambda r_1)^{-\frac 23}\|f\|_{L^2(d\theta_1)}^2.
\end{multline*}
and after taking square roots, the contribution of this expression is bounded above by the right hand side of \eqref{restrictedr1bd}.  For the second expression, we use Minkowski's inequality again
\begin{multline*}
\left( \int_{\lambda^{-1}}^1 \left(\sum_{2^\ell > (\lambda r_1r_2)^{\frac 12}} 2^{-\frac{4\ell}3}
\|\beta_\ell f\|_{L^2(d\theta_1)}^2r_2^{\frac 13}\right)^3\,dr_2\right)^{\frac 13}
\\
\lesssim
\sum_{\mu } 2^{-\frac{4 \ell}3}\|\beta_\ell f\|_{L^2(d\theta_1)}^2\left( \int_{r_2\leq 2^{2 \ell }(\lambda r_1)^{-1} }r_2\,dr_2\right)^{\frac 13} \\
\lesssim (\lambda r_1)^{-\frac 23}\sum_{\ell=0 }^\infty \|\beta_\ell f\|_{L^2(d\theta_1)}^2 \approx
(\lambda r_1)^{-\frac 23}\|f\|_{L^2(d\theta_1)}^2.
\end{multline*}
The desired bound \eqref{restrictedr1bd} now follows as before after taking square roots.
\end{proof}

\subsubsection{The case $r_1+r_2 \leq \delta/2$}
We now consider the contribution of
\[
\mathbf{1}_{(\lambda^{-1},\infty)}(r_1)\mathbf{1}_{(\lambda^{-1},\infty)}(r_2)
\mathbf{1}_{(0,\frac{\delta}{2})}(r_1+r_2)K(r_1,r_2,\theta_2-\theta_1)
\]
to \eqref{diffLpL2}.  As before, we will assume that $r_1,r_2$ evaluate to one along the characteristic functions here throughout this subsection to avoid cluttering notation. Recall that the amplitude $a_\lambda$ in \eqref{alambdadiff} decays rapidly outside a $\lambda^{-1}$ neighborhood of $\supp(\widehat{\chi})\subset (\delta, 2\delta)$.  Hence for any $N>0$,
$ a_\lambda(D(r_1,r_2,s)) = O(\lambda^{-N}) $ if $D(r_1,r_2,s) \notin (\delta,2\delta)$.  But $D(r_1,r_2,s) \in (\delta,2\delta)$ implies that
\[
2 r_1r_2(\cosh s-1) \geq \delta^2 - (r_1+r_2)^2 \geq \frac{3\delta^2}{4} ,
\]
which means there exists a sufficiently small constant $c_0$ such that $ s \geq c_0(r_1r_2)^{-\frac 12}$.  Since we are assuming $r_i\geq \lambda^{-1}$, $i=1,2$, we can recall \eqref{thetayoungs} to see that the integral operator with kernel
\[
\int_0^{c_0(r_1r_2)^{-\frac 12}} e^{i\lambda D(r_1,r_2,s)}
\frac{\sin(\frac{\theta}{\rho})}{\cosh(\frac s\rho)-\cos\varphi}a_\lambda\left(D(r_1,r_2,s)\right) \,ds
\]
maps $L^2 \to L^6$ with a gain of $O(\lambda^{-N})$ for any $N>0$ using the decay in $a_\lambda (D)$ in this region.

It thus suffices to consider the contribution of
\[
\int_{c_0(r_1r_2)^{-\frac 12}}^\infty e^{i\lambda D(r_1,r_2,s)}
\frac{\sin(\frac{\theta}{\rho})}{\cosh(\frac s\rho)-\cos\varphi}a_\lambda\left(D(r_1,r_2,s)\right) \,ds.
\]
Since
\[
\prtl_s D(r_1,r_2,s) =  \frac{r_1r_2\sinh s}{D(r_1,r_2,s)}
\]
the phase function has no critical points in $[c_0(r_1r_2)^{-\frac 12},\infty)$.  Using that $D \times a_\lambda(D)$ is bounded, the integral is $O(\lambda^{-1}(r_1r_2)^{-\frac 12})$, and by the argument in \eqref{spcontribution}, this yields a kernel which maps $L^2 \to L^6$ with a gain of $\lambda^{-\frac 56} = \lambda^{-\frac13 - \frac12}$.

\section*{References}
\label{sec:references}

\begin{bibdiv}
\begin{biblist}

  \bib{blair-spclus}{article}{
    title={Spectral cluster estimates for metrics of Sobolev regularity},
  author={Blair, Matthew D.},
  journal={Transactions of the American Mathematical Society},
  volume={361},
  number={3},
  pages={1209--1240},
  year={2009}
} 

  \bib{blair2014strichartz}{article}{
    title={Strichartz and Localized Energy Estimates for the Wave Equation in Strictly Concave Domains},
  author={Blair, Matthew D},
  journal={arXiv preprint arXiv:1411.1453},
  year={2014}
}

\bib{BFHM}{article}{
  title={Strichartz estimates for the Schr{\"o}dinger equation on polygonal domains},
  author={Blair, Matthew D}, author={ Ford, G Austin}, author={Herr, Sebastian}, author={Marzuola, Jeremy L},
  journal={Journal of Geometric Analysis},
  volume={22},
  number={2},
  pages={339--351},
  year={2012},
  publisher={Springer}
}

\bib{BFM}{article}{   
title={Strichartz estimates for the wave equation on flat cones},
  author={Blair, Matthew D}, author={Ford, G Austin}, author={Marzuola, Jeremy L},
  journal={International Mathematics Research Notices},
  number={3},
  pages={562-591},
  year={2013},
  publisher={Oxford University Press}
}

\bib{bourgain2014proof}{article}{
  title={The proof of the $L^2$ Decoupling Conjecture},
  author={Bourgain, Jean}, author={Demeter, Ciprian},
  journal={To appear in Annals of Mathematics.  Preprint at arXiv:1403.5335},
  year={2014}
}

\bib{burq2008global}{article}{
  title={Global existence for energy critical waves in 3-D domains},
  author={Burq, Nicolas}, author={Lebeau, Gilles}, author={Planchon, Fabrice},
  journal={Journal of the American Mathematical Society},
  volume={21},
  number={3},
  pages={831--845},
  year={2008}
}

  \bib{CheTay1}{article}{ author={Cheeger, J.}, author={Taylor,
      M.E.}, title={On the diffraction of waves by conical singularities. I},
    journal={Comm. Pure Appl. Math.}, volume={35}, date={1982}, number={3},
    pages={275--331}, issn={0010-3640}, review={\MR{649347 (84h:35091a)}},
    doi={10.1002/cpa.3160350302}, }

  \bib{CheTay2}{article}{ author={Cheeger, J.}, author={Taylor,
      M.E.}, title={On the diffraction of waves by conical
      singularities. II}, journal={Comm. Pure Appl. Math.}, volume={35},
    date={1982}, number={4}, pages={487--529}, issn={0010-3640},
    review={\MR{657825 (84h:35091b)}}, doi={10.1002/cpa.3160350403}, }

\bib{Erdelyi}{book}{
  title={Asymptotic Expansions},
  author={Erd{\'e}lyi, Arthur},
  number={3},
  year={1956},
  publisher={Courier Corporation}
}

  \bib{For}{article}{
    author={Ford, G.A.},
    title={The fundamental solution and Strichartz estimates for the
      Schr\"odinger equation on flat Euclidean cones},
    journal={Comm. Math. Phys.},
    volume={299},
    date={2010},
    number={2},
    pages={447--467},
  }

\bib{Grafakos}{book}{
  title={Classical Fourier Analysis},
  author={Grafakos, Loukas},
  volume={2},
  year={2008},
  publisher={Springer}
}

\bib{grieser1992lp}{book}{
  title={$L^p$ bounds for eigenfunctions and spectral projections of the Laplacian near concave boundaries},
  author={Grieser, Daniel},
  year={1992},
  school={UCLA}
}

\bib{HorOsc}{article}{
  title={Oscillatory integrals and multipliers on ${FL}^p$},
  author={H{\"o}rmander, Lars},
  journal={Arkiv f{\"o}r Matematik},
  volume={11},
  number={1},
  pages={1--11},
  year={1973},
  publisher={Springer}
}

\bib{KST}{article}{
  title={Sharp $L^q$ bounds on spectral clusters for H\"older metrics},
  author={Koch, Herbert}, author={Smith, Hart F}, author={Tataru, Daniel},
  journal={Mathematical Research Letters},
  volume={14},
  number={1},
  pages={77-85},
  year={2007},
  publisher={International Press}
} 

  \bib{Leb}{book}{ author={Lebedev, N.N.}, title={Special {F}unctions \&
      {T}heir {A}pplications}, publisher={Dover Publications, Inc.},
    date={1972}, }

  \bib{melrosewunsch}{article}{
    title={Propagation of singularities for the wave equation on conic manifolds},
  author={Melrose, Richard}, author={Wunsch, Jared},
  journal={Inventiones Mathematicae},
  volume={156},
  number={2},
  pages={235--299},
  year={2004},
  publisher={Springer}
}

\bib{mulseeg}{article}{
    AUTHOR = {M{\"u}ller, Detlef}, AUTHOR= {Seeger, Andreas},
     TITLE = {Regularity properties of wave propagation on conic manifolds
              and applications to spectral multipliers},
   JOURNAL = {Adv. Math.},
    VOLUME = {161},
      YEAR = {2001},
    NUMBER = {1},
     PAGES = {41--130},
      ISSN = {0001-8708},
       DOI = {10.1006/aima.2001.1989},
       URL = {http://dx.doi.org/10.1006/aima.2001.1989},
}

\bib{smith}{article}{
  title={Spectral cluster estimates for $C^{1,1}$ metrics},
  author={Smith, Hart F},
  journal={American Journal of Mathematics},
  volume={128},
  number={5},
  pages={1069--1103},
  year={2006},
  publisher={The Johns Hopkins University Press}
}

\bib{smith1994lp}{article}{
  title={$L^p$ regularity for the wave equation with strictly convex obstacles},
  author={Smith, Hart F}, author={Sogge, Christopher D},
  journal={Duke Mathematical Journal},
  volume={73},
  number={1},
  pages={97--154},
  year={1994},
  publisher={Durham, NC: Duke University Press, 1935-}
}

\bib{smith2007p}{article}{
  title={On the $L^p$ norm of spectral clusters for compact manifolds with boundary},
  author={Smith, Hart F}, author={Sogge, Christopher D},
  journal={Acta Mathematica},
  volume={198},
  number={1},
  pages={107--153},
  year={2007},
  publisher={Springer}
}

\bib{sogge}{article}{
  title={Concerning the $L^p$ norm of spectral clusters for second-order elliptic operators on compact manifolds},
  author={Sogge, Christopher D},
  journal={Journal of functional analysis},
  volume={77},
  number={1},
  pages={123--138},
  year={1988},
  publisher={Elsevier}
}

  \bib{Sog}{book}{
    author={Sogge, Christopher D.},
    title={Fourier integrals in classical analysis},
    series={Cambridge Tracts in Mathematics},
    volume={105},
    publisher={Cambridge University Press, Cambridge},
    date={1993},
    pages={x+237},
    isbn={0-521-43464-5},
  }

\bib{steinbeijing}{book}{
  title={Oscillatory integrals in Fourier analysis},
  author={Stein, Elias M},
  publisher={Beijing lectures in harmonic analysis},
  pages={307--355},
  date={1986}
}

\bib{steinharmonic}{book}{
  title={Harmonic analysis: real-variable methods, orthogonality, and oscillatory integrals},
  author={Stein, Elias M}, author={Murphy, Timothy S}, 
  volume={3},
  year={1993},
  publisher={Princeton University Press}
}

\bib{Wat}{book}{ author={Watson, G.N.}, title={A treatise on the theory of
      {B}essel functions}, series={Cambridge Mathematical Library},
    publisher={Cambridge Univ. Press}, address={Cambridge}, date={1995},
    note={Reprint of the second (1944) edition} }

\end{biblist}
\end{bibdiv}

\end{document}